\documentclass[10pt,a4paper,reqno]{amsart}
\usepackage{amsfonts,amsthm,amsmath,amssymb,amscd,latexsym,url,microtype}

\newtheorem{theorem}{Theorem}[section]
\newtheorem{lemma}[theorem]{Lemma}
\newtheorem{corollary}[theorem]{Corollary}
\newtheorem{proposition}[theorem]{Proposition}
\newtheorem{conjecture}[theorem]{Conjecture}
\newtheorem{remark}[theorem]{Remark}

\newcommand{\C}{\mathbb C}
\newcommand{\R}{\mathbb R}

\newcommand{\diag}{\operatorname{diag}}

\begin{document}
\title[Reducibility of spectral curves of Jacobi pencils]{Reducibility of spectral curves of finite Jacobi pencils}
\author[B.~Shapiro]{Boris Shapiro}
\address{Department of Mathematics, Stockholm University, SE-106 91, Stockholm, Sweden}
\email{shapiro@math.su.se}
\date{}
\keywords{Jacobi matrix, matrix pencil, spectral curve, characteristic polynomial, reducibility, tridiagonal matrix}
\subjclass[2020]{15A18, 15A22, 14H50, 47B36}

\begin{abstract}
We study reducibility of the spectral polynomial
\[
\chi_n(\lambda,w)=\det(\lambda I+A+wB),
\]
where $A$ is diagonal and $B$ is tridiagonal with zero diagonal.  On the
principal stratum, where the diagonal entries are pairwise distinct and all
off-diagonal couplings are non-zero, we prove generic irreducibility for a
fixed diagonal and classify reducibility completely for $n\le4$.  In degree
four the reducible locus, expressed in the squared couplings $c_i=b_i^2$, is
smooth and irreducible of codimension two and admits an explicit
parameterization.

Two structural results hold in all degrees.  If $n$ is even, every polynomial
factor has even $\lambda$-degree.  If a degree-one factor occurs, then $n$ is
odd and the corresponding constant branch is forced to the central site; an
explicit mirror criterion characterizes it.  We also construct an
$(n+1)$-parameter endpoint family
\[
c_k=\kappa(a_{k+1}-a_1)(a_k-a_n)
\]
for which a quadratic factor peels off and the complementary factor is the
spectral polynomial of the chain with the two end sites deleted.  This realizes
codimension $n-2$ for the splitting $2+(n-2)$ in every degree and yields the
spin factorization by iteration.

Finally, we organize reducibility by specialization subsets at $w=0$.  A
coefficient-matching incidence count gives the natural bound
$\lceil d(n-d)/2\rceil$ for a factor of degree $d$; in degree eight exact
Jacobian computations realize the predicted codimensions $6$ and $8$ for
the splitting types $2+6$ and $4+4$.  On the real Jacobi locus we prove a
rank-symmetry constraint.  We conclude with reversal-invariance and
expected-codimension conjectures for the general reducible locus.
\end{abstract}

\maketitle

\section{Introduction}
Let
\[
J_n(w)=
\begin{pmatrix}
 a_1&wb_1&0&\cdots&0\\
 wb_1&a_2&wb_2&\ddots&\vdots\\
 0&wb_2&a_3&\ddots&0\\
 \vdots&\ddots&\ddots&\ddots&wb_{n-1}\\
 0&\cdots&0&wb_{n-1}&a_n
\end{pmatrix}
=A+wB
\]
be a finite Jacobi pencil, with complex parameters unless stated otherwise,
and set
\[
\chi_n(\lambda,w)=\det(\lambda I+J_n(w)).
\]
We call $\chi_n(\lambda,w)=0$ its affine spectral curve.  Since the
off-diagonal parameter enters only through pairs, $\chi_n$ is even in $w$;
we therefore write
\[
t=w^2,\qquad P_n(\lambda,t)=\chi_n(\lambda,w).
\]
The continuant recurrence is
\begin{equation}\label{eq:continuant}
P_0=1,\qquad P_1=\lambda+a_1,\qquad
P_k=(\lambda+a_k)P_{k-1}-tb_{k-1}^2P_{k-2}.
\end{equation}
Our problem is to describe the parameters for which $\chi_n$ is reducible in
$\C[\lambda,w]$.

This finite-dimensional question is motivated by work of Djakov and Mityagin
on spectral Riemann surfaces of infinite tridiagonal pencils
\cite{DjMi1,ADM}.  Several special configurations give immediate
factorizations---cuts, constant branches, reversal symmetry, and scalar
diagonal blocks---but they do not by themselves describe reducibility on the
open stratum with distinct diagonal entries and non-zero couplings.  On that
stratum it is more effective to label a factor by the subset of roots it
selects at $w=0$.

The paper has three main components.  First, we prove generic irreducibility
for a fixed simple diagonal and classify the low degrees.  In degree four
the connected pairwise-distinct reducible locus is completely explicit: in
the squared couplings $c_i=b_i^2$ it is smooth, irreducible, and of
codimension two.

Second, we prove two structural theorems valid in arbitrary degree.  For even
$n$, every polynomial factor has even $\lambda$-degree.  For odd $n$, a
linear factor can occur only at the central site, where it is characterized
by a mirror relation between the two half-chains.  We also exhibit the
endpoint family
\[
c_k=\kappa(a_{k+1}-a_1)(a_k-a_n),\qquad 1\le k\le n-1,
\]
for which
\[
P_n=
\Bigl[(\lambda+a_1)(\lambda+a_n)+\kappa(a_1-a_n)^2t\Bigr]\,
\widetilde P_{n-2}.
\]
Thus the type $2+(n-2)$ occurs in every degree on a smooth irreducible family
of codimension $n-2$.  Iteration at a distinguished fixed family gives the
spin factorization.

Third, we introduce the subset strata $\mathcal R_{n,S}$ and a bounded
coefficient-matching incidence problem.  For a factor of $\lambda$-degree
$d$ this gives the natural codimension
\[
\kappa(n,d)=\left\lceil\frac{d(n-d)}2\right\rceil,
\]
and proves that every irreducible component of a non-empty subset stratum has
codimension at most $\kappa(n,d)$.  Exact degree-eight Jacobian calculations
produce transverse branches of codimensions $6$ and $8$ for the splitting
types $2+6$ and $4+4$.  These results motivate two conjectures: that every
specialization subset is invariant under reversal of the chain, and that
non-empty subset strata have the expected codimension.  On the real Jacobi
locus we also prove an independent symmetry constraint on the ranks of the
selected diagonal entries.

The paper is organized accordingly.  Sections~\ref{sec:generic}--\ref{sec:low}
establish the general algebraic setup and the complete low-degree results.
Section~\ref{sec:central} treats linear factors, Section~\ref{sec:endpoint}
proves the peeling identity and studies the endpoint family, and
Section~\ref{sec:factor-strata} develops the subset-stratum viewpoint and the
degree-eight evidence.  Section~\ref{sec:outlook} formulates the general
conjectural picture.  The appendix records only the exact computational
certificates used in the text.

\section{Generic irreducibility}\label{sec:generic}

We first record a degree bound that will be used repeatedly.

\begin{lemma}[Degree bound for factors]\label{lem:degree-bound}
Let
\[
Q(\lambda,w)=\lambda^d+q_1(w)\lambda^{d-1}+\cdots+q_d(w)
\]
be a monic factor of $\chi_n(\lambda,w)$.  Then
\[
\deg_w q_j\le j,\qquad 1\le j\le d.
\]
If, in addition, $a_1,\ldots,a_n$ are pairwise distinct, then every monic
factor of $\chi_n$ is even in $w$.  Consequently
\[
q_j(w)=\widetilde q_j(t),\qquad
\deg_t\widetilde q_j\le \left\lfloor\frac j2\right\rfloor.
\]
\end{lemma}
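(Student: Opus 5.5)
The plan is to prove the two assertions separately: the degree bound by a total-degree (leading form) argument, and the evenness by a specialization argument at $w=0$.

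\emph{Degree bound.} Since $\chi_n$ is monic in $\lambda$ with coefficients in $\C[w]$, Gauss's lemma lets me write $\chi_n=QR$ with $R$ monic in $\lambda$ of degree $n-d$. I will regard $\chi_n$ as a polynomial in $(\lambda,w)$ of total degree $n$. Every entry of $\lambda I+A+wB$ has total degree $\le 1$, so every monomial of $\chi_n$ has total degree $\le n$. The homogeneous part of degree exactly $n$ is $\det(\lambda I+wB)$, which contains $\lambda^n$ with coefficient $1$.

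Let $D=\deg Q$ and $E=\deg R$ be total degrees. Because $Q$ is monic in $\lambda$ of degree $d$ it contains the monomial $\lambda^d$, so $D\ge d$; likewise $E\ge n-d$. The top forms multiply, and $\C[\lambda,w]$ is a domain, so $D+E=\deg\chi_n=n$. Hence $D=d$ exactly. Every monomial $\lambda^{d-j}w^k$ in $Q$ therefore satisfies $d-j+k\le d$, that is, $\deg_w q_j\le j$.

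\emph{Evenness.} I will use two facts.
\begin{itemize}
\item $\chi_n(\lambda,-w)=\chi_n(\lambda,w)$, as noted in the introduction.
\item When the $a_i$ are pairwise distinct, $\chi_n(\lambda,0)=\prod_i(\lambda+a_i)$ is squarefree.
\end{itemize}
It suffices to show that every monic irreducible factor $F$ of $\chi_n$ is even in $w$, because products of even polynomials are even.

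Set $F^-(\lambda,w)=F(\lambda,-w)$. By the first fact, $F^-$ is again a monic irreducible factor of $\chi_n$. Suppose $F^-\ne F$. Then $F$ and $F^-$ are non-associate irreducibles (both are monic in $\lambda$), so $FF^-$ divides $\chi_n$. Specializing at $w=0$ gives $F(\lambda,0)^2\mid\chi_n(\lambda,0)$. Since $\deg_\lambda F(\lambda,0)=\deg_\lambda F\ge1$ by monicity, this contradicts the second fact. Hence $F^-=F$.

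Consequently $q_j(w)$ contains only even powers of $w$, so $q_j(w)=\widetilde q_j(w^2)$. The bound $\deg_w q_j\le j$ then becomes $2\deg_t\widetilde q_j\le j$, which is $\deg_t\widetilde q_j\le\lfloor j/2\rfloor$.

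\emph{Expected obstacle.} The main point needing care is the bookkeeping in the degree bound. Monicity of $R$ comes from Gauss's lemma, and it is what guarantees the lower bounds $D\ge d$ and $E\ge n-d$ simultaneously, which force the equality $D=d$. The evenness argument is routine once the squarefreeness at $w=0$ is invoked.
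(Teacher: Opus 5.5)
Your proof is correct, but both halves take a different route from the paper.

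\textbf{Degree bound.} The paper argues analytically. The roots of $Q(\cdot,w)$ are negatives of eigenvalues of $A+wB$, hence $O(1+|w|)$. Since $q_j$ is an elementary symmetric function of $d$ of them, $q_j(w)=O(|w|^j)$. You instead compare total degrees and top homogeneous forms in $\C[\lambda,w]$. This is purely algebraic and gives slightly more: $Q$ has total degree exactly $d$, and its top form divides $\det(\lambda I+wB)$. These are precisely the facts the paper later extracts from this lemma in Propositions~\ref{prop:even-factor-degree} and~\ref{prop:rank-symmetry}. Incidentally, monicity of $R$ needs no Gauss lemma: the $\lambda$-leading coefficients of $Q$ and $R$ multiply to $1$.

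\textbf{Evenness.} The paper applies Hensel uniqueness in $\C[[w]][\lambda]$ directly to the arbitrary monic factor $Q$. Since $Q(\lambda,-w)$ is a monic factor with the same coprime specialization at $w=0$, it equals $Q$. You avoid power series by passing to monic irreducible factors and using unique factorization in $\C[\lambda,w]$. If $F\ne F(\lambda,-w)$, their product divides $\chi_n$, which puts the square $F(\lambda,0)^2$ inside the squarefree $\chi_n(\lambda,0)$.

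Your version is more elementary. The paper's handles every monic factor in one step, and it sets up the Hensel-factor viewpoint that is reused in Theorem~\ref{thm:generic} and Proposition~\ref{prop:subset-closed}.
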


\begin{proof}
For fixed $w$, every root of $Q(\cdot,w)$ is a root of $\chi_n(\cdot,w)$,
hence is the negative of an eigenvalue of $A+wB$.  Thus every such root is
$O(1+|w|)$ as $|w|\to\infty$.  The coefficient $q_j$ is, up to sign, the
$j$th elementary symmetric function of $d$ of these roots, so
$q_j(w)=O(|w|^j)$.  Since $q_j$ is a polynomial, $\deg_w q_j\le j$.

Assume now that the $a_i$ are pairwise distinct.  At $w=0$ we have
\[
\chi_n(\lambda,0)=\prod_{i=1}^n(\lambda+a_i),
\]
which is square-free.  Hence, after making $Q$ monic, there is a unique
non-empty subset $S\subset\{1,\ldots,n\}$ such that
\[
Q(\lambda,0)=\prod_{i\in S}(\lambda+a_i).
\]
Let $R=\chi_n/Q$.  The two specializations $Q(\lambda,0)$ and
$R(\lambda,0)$ are coprime.  Hensel uniqueness in $\C[[w]][\lambda]$ therefore
shows that a monic factor with this specialization is unique.  But
$Q(\lambda,-w)$ is another such factor, so $Q(\lambda,-w)=Q(\lambda,w)$.
The final degree bound follows from $t=w^2$.
\end{proof}

\begin{theorem}[Generic irreducibility]\label{thm:generic}
Assume that $a_1,\ldots,a_n$ are pairwise distinct.  For these fixed diagonal
entries, $\chi_n(\lambda,w)$ is irreducible in $\C[\lambda,w]$ for all
$(b_1,\ldots,b_{n-1})$ outside a proper Zariski-closed subset of
$\C^{n-1}$.
\end{theorem}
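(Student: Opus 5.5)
Fix pairwise distinct $a_1,\ldots,a_n$. The plan is to show that the reducible parameters form a Zariski-closed set $Z\subset\C^{n-1}$, and then to find one point $b\in\C^{n-1}$ outside $Z$.

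\emph{Closedness.} By Lemma~\ref{lem:degree-bound}, any monic factor $Q$ of $\lambda$-degree $d$ with $1\le d\le n-1$ has coefficients $q_j=\widetilde q_j(t)$ with $\deg_t\widetilde q_j\le\lfloor j/2\rfloor$. Such factors are therefore parametrized by a finite-dimensional affine space $V_d$ of coefficient tuples. Consider
\[
\Bigl\{(b,Q)\in\C^{n-1}\times V_d:\ Q \text{ divides } P_n(\lambda,t;b)\Bigr\}.
\]
Divisibility means that the remainder of $P_n$ upon division by the monic $Q$ in $\lambda$ vanishes identically in $t$. These remainder coefficients are polynomial in $(b,Q)$, so the set above is closed. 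We still have to project it to the $b$-coordinates.

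\emph{Properness of the projection.} Fixing $Q(\lambda,0)=\prod_{i\in S}(\lambda+a_i)$ removes the freedom in the constant terms. The remaining coefficients are bounded on bounded sets of $b$, since the eigenvalues are $O(1+|w|)$ uniformly for bounded $b$. Hence the projection to $b$ is proper, and its image is closed. Alternatively one can use a resultant or elimination argument. Taking the union over $d$ and over subsets $S$ gives the closed set $Z$. Note also that the polynomial $\chi_n$ is not reducible with a factor free of $\lambda$, since it is monic in $\lambda$.

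\emph{A witness point.} I need one $b$ for which $P_n$ is irreducible. My first attempt is to use monodromy/Galois considerations. For generic $t$, the roots in $\lambda$ are the eigenvalues of $A+wB$, and a factor corresponds to a subset $S$ invariant under monodromy around the branch points in the $w$-plane. It therefore suffices to show that the monodromy group acts transitively on the $n$ branches.

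A cleaner route is degeneration. Take $b_i=\varepsilon\beta_i$ with $\varepsilon$ small, or equivalently look near $t=0$, so that branch $i$ starts at $-a_i$. As $t$ varies along a path, two eigenvalue branches collide at a point where $a_i$ and $a_{i+1}$ effectively meet; adjacent sites swap at a simple branch point, producing the transposition $(i\ i+1)$. All adjacent transpositions together generate $S_n$, which forces transitivity.

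To make this rigorous at a specific point, I would rather argue directly by specialization. Choose $b$ with only $b_k\ne0$ for a single $k$: then $P_n$ factors as the irreducible quadratic $(\lambda+a_k)(\lambda+a_{k+1})-tb_k^2$ times linear factors. Any factorization near this point must be compatible with one that keeps sites $k,k+1$ together. Since $Z$ is closed, the subsets $S$ occurring on a neighborhood of this point must be unions of blocks that keep $k,k+1$ together. This argument is not perfect, because points with $b_j=0$ lie in $Z$, so I would refine it.

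The refinement is to consider the curve $b=(s\beta_1,\ldots,s\beta_{n-1})$ and note that $P_n(\lambda,t;sb)=P_n(\lambda,s^2t;b)$. So irreducibility is unchanged by scaling, and $Z$ is a cone. Working on an irreducible component of $Z$, I would need to show it avoids a suitable coordinate-hyperplane configuration.

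\emph{Main obstacle.} Exhibiting one explicit irreducible point is the crux. The fallback I would commit to is the Galois argument over $\C(t)$: show that the discriminant of $P_n$ in $\lambda$ has simple zeros for generic $b$, each giving a transposition, and that the graph on $\{1,\ldots,n\}$ formed by these transpositions is connected. Connectivity comes from each $b_k\ne0$ linking $k$ and $k+1$, which I would check by perturbing from the single-coupling situation.
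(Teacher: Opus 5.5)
\textbf{Verdict: genuine gap.} Your closedness step is essentially sound. The gap is the step you flag as the crux: you never produce a point outside $Z$, and none of the routes you sketch (degeneration, cone structure, monodromy) is finished.

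\textbf{The missing idea.} You do not need one witness point for all of $Z$. Your closedness argument already works subset by subset, so $Z=\bigcup_S\mathcal E_S$, where $\mathcal E_S$ is the Zariski-closed locus on which a factor with $Q(\lambda,0)=\prod_{i\in S}(\lambda+a_i)$ exists. Since $\C^{n-1}$ is irreducible, a finite union of proper closed subsets is proper. So it suffices to show $\mathcal E_S\ne\C^{n-1}$ for each $S$, and the witness may depend on $S$.

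Your single-coupling point supplies exactly this witness. Because the path is connected and $S$ is non-empty and proper, some edge $k$ has exactly one endpoint in $S$. Take $b_k=1$ and $b_i=0$ for $i\ne k$. Then $\chi_n$ is $\bigl((\lambda+a_k)(\lambda+a_{k+1})-w^2\bigr)$ times linear factors. The quadratic is irreducible, since $(a_k-a_{k+1})^2+4w^2$ is not a square in $\C[w]$. Hence every factor specializes at $w=0$ to a product containing both or neither of $\lambda+a_k$ and $\lambda+a_{k+1}$. So this point lies outside $\mathcal E_S$, even though it lies in $Z$ through other subsets. This is precisely the paper's proof.

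Your remark about neighbourhoods is the Euclidean shadow of this argument. Small Euclidean neighbourhoods of different single-coupling points do not meet. The non-empty Zariski-open sets $\C^{n-1}\setminus\mathcal E_S$, however, are dense, so their finite intersection is non-empty.

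\textbf{Why your alternatives do not close the gap.}
\begin{itemize}
\item Scaling $b\mapsto sb$ is equivalent to $t\mapsto s^2t$. So both the small-$\varepsilon$ degeneration and the cone structure of $Z$ say nothing about which $S$ occur.
\item The monodromy fallback leaves two things unproved. First, that the discriminant has simple zeros for generic $b$. Second, and more seriously, that at one common $b$ the resulting transpositions include every $(k\ k{+}1)$, with branches labelled by transport from $t=0$.
\item Perturbing from the single-coupling points gives that second property only near each such point separately. That is the same obstruction again.
\end{itemize}

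\textbf{A minor point on closedness.} Properness of the projection in the classical topology only yields a classically closed image. To conclude Zariski-closedness you need Chevalley's theorem plus the fact that a constructible set closed in the classical topology is Zariski closed. Alternatively, use the paper's algebraic route. With the $b_i$ as indeterminates, the Hensel factor $\widehat F_S\in\C[b][[w]][\lambda]$ has coefficients polynomial in $b$, since one divides only by the constant resultant of $F_0$ and $G_0$. Then $\mathcal E_S$ is the zero set of the coefficients lying beyond the bounds of Lemma~\ref{lem:degree-bound}.
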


\begin{proof}
Fix a non-empty proper subset $S\subset\{1,\ldots,n\}$ and put
\[
F_0(\lambda)=\prod_{i\in S}(\lambda+a_i),\qquad
G_0(\lambda)=\prod_{i\notin S}(\lambda+a_i).
\]
Their resultant is a non-zero complex number.  Therefore, with the $b_i$
regarded as indeterminates, Hensel lifting gives a unique factorization
\[
\chi_n(\lambda,w)=\widehat F_S(\lambda,w)\widehat G_S(\lambda,w)
\]
in
$\C[b_1,\ldots,b_{n-1}][[w]][\lambda]$, with $\widehat F_S$ and
$\widehat G_S$ monic in $\lambda$ and with specializations $F_0,G_0$ at
$w=0$.

Let $d=|S|$.  By Lemma~\ref{lem:degree-bound}, an actual polynomial factor
with specialization $F_0$ must have the coefficient of
$\lambda^{d-j}$ of $w$-degree at most $j$.  Hence the locus
$\mathcal E_S\subset\C^{n-1}$ on which such a factor exists is the common
zero set of the higher $w$-coefficients in the formal factor
$\widehat F_S$.  These coefficients are polynomials in the $b_i$ (the Hensel
recursion divides only by the non-zero constant resultant), so
$\mathcal E_S$ is Zariski closed.  Conversely, if those higher
coefficients vanish, $\widehat F_S$ is a polynomial; monic division of
$\chi_n$ by $\widehat F_S$ then shows that it is an actual polynomial
factor.  The ideal generated by all coefficients beyond the degree bounds is
finitely generated by the Noetherian property of the polynomial ring in the
$b_i$.

It remains to see that $\mathcal E_S$ is proper.  Since the path graph is
connected, some edge $k$ has exactly one endpoint in $S$.  Specialize
$b_k=1$ and $b_i=0$ for $i\ne k$.  Then
\[
\chi_n(\lambda,w)=
\bigl((\lambda+a_k)(\lambda+a_{k+1})-w^2\bigr)
\prod_{j\ne k,k+1}(\lambda+a_j).
\]
The quadratic factor is irreducible in $\C[\lambda,w]$, because its
discriminant
\[
(a_k-a_{k+1})^2+4w^2
\]
is not a square in $\C[w]$.  Thus every factor of the specialized polynomial
contains either both of the factors $\lambda+a_k,\lambda+a_{k+1}$ at $w=0$
or neither of them.  It cannot have specialization $F_0$, so this point does
not lie in $\mathcal E_S$.

There are only finitely many non-empty proper subsets $S$.  Their proper
closed loci $\mathcal E_S$ therefore have a proper finite union.  Outside
that union no non-trivial factorization is possible.
\end{proof}

\begin{proposition}[Parity of factors in even degree]\label{prop:even-factor-degree}
Assume that $n$ is even, the diagonal entries $a_1,\ldots,a_n$ are pairwise
distinct, and $b_1\cdots b_{n-1}\ne0$.  Then every non-constant monic factor
of $\chi_n(\lambda,w)$ has even $\lambda$-degree.
\end{proposition}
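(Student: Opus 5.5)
The plan is to read off the $\lambda$-degree of a factor from its behaviour at infinity, that is, from its top homogeneous part in $(\lambda,w)$. Let $Q(\lambda,w)=\lambda^d+q_1(w)\lambda^{d-1}+\cdots+q_d(w)$ be a non-constant monic factor of $\chi_n$, with $\chi_n=QR$ and $R$ also monic in $\lambda$.

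First I would invoke Lemma~\ref{lem:degree-bound}. Since $\deg_w q_j\le j$, the polynomial $Q$ has total degree exactly $d$. Its homogeneous part of degree $d$ is
\[
Q^{\mathrm{top}}(\lambda,w)=\lambda^d+\sum_j [w^j]q_j\,w^j\lambda^{d-j},
\]
which is non-zero because the coefficient of $\lambda^d$ is $1$. The same holds for $R$. The top homogeneous part of a product is the product of the top parts, and the top part of $\chi_n$ is $\det(\lambda I+wB)$. Hence $Q^{\mathrm{top}}$ divides $\det(\lambda I+wB)$ in $\C[\lambda,w]$.

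Next, because the $a_i$ are pairwise distinct, the second half of Lemma~\ref{lem:degree-bound} says $Q$ is even in $w$. Each homogeneous component of $Q$ is then even in $w$, so $Q^{\mathrm{top}}(\lambda,-w)=Q^{\mathrm{top}}(\lambda,w)$.

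I then analyse $\det(\lambda I+wB)=\prod_{j=1}^n(\lambda-\nu_j w)$, where the $\nu_j$ are the eigenvalues of $-B$. Two facts are needed about them.
\begin{itemize}
\item Conjugation by $\diag(1,-1,1,\ldots)$ sends $B$ to $-B$. So the spectrum is symmetric under $\nu\mapsto-\nu$, with multiplicities.
\item For $n$ even the continuant expansion gives $\det B=(-1)^{n/2}\prod_{i\ \mathrm{odd}} b_i^2$, which is non-zero by the hypothesis $b_1\cdots b_{n-1}\ne0$. So no $\nu_j$ vanishes.
\end{itemize}
Since a product of linear forms factors uniquely, $Q^{\mathrm{top}}=\prod_{j\in T}(\lambda-\nu_jw)$ for a sub-multiset $T$ of size $d$. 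Evenness in $w$ means the multiset $\{\nu_j\}_{j\in T}$ is invariant under negation: substituting $w\mapsto -w$ and using uniqueness of factorization shows that each $\nu$ occurs in $T$ with the same multiplicity as $-\nu$. As $0\notin T$, the elements of $T$ pair off as $\nu,-\nu$ with $\nu\neq-\nu$, so $d=|T|$ is even.

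The main point to get right is the bookkeeping of the top homogeneous part. One must check that $Q^{\mathrm{top}}$ really has degree $d$ (guaranteed by the degree bound together with monicity) and that the multiplicity argument for the symmetric multiset is valid even when $B$ has repeated eigenvalues. The latter is routine once phrased as equality of multiplicity functions $m_T(\nu)=m_T(-\nu)$. The non-vanishing of $\det B$ is exactly where both the parity of $n$ and the hypothesis $b_1\cdots b_{n-1}\ne0$ are used. For odd $n$ the eigenvalue $0$ of $B$ is what allows odd-degree (central linear) factors.
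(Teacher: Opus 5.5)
Your proposal is correct and follows essentially the paper's argument: pass to the top homogeneous part $Q_d$, which is even in $w$ and divides $\det(\lambda I+wB)$, then use $\det B=(-1)^{n/2}b_1^2b_3^2\cdots b_{n-1}^2\ne0$ to rule out odd $d$. The paper finishes by dehomogenizing instead: $q(\lambda)=Q_d(\lambda,1)$ satisfies $q(-\lambda)=(-1)^dq(\lambda)$, so odd $d$ forces $q(0)=0$ and hence $\det B=0$. This is equivalent to your multiset pairing, and your first bullet on the symmetry of $\operatorname{spec}B$ is not actually needed.
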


\begin{proof}
Let $Q$ be a monic factor of $\lambda$-degree $d$, and write
$\chi_n=QR$.  By Lemma~\ref{lem:degree-bound}, $Q$ is even in $w$ and has
total degree at most $d$ in $(\lambda,w)$.  Let $Q_d$ denote its homogeneous
part of total degree $d$.  The same degree bound applies to $R$, and taking
top homogeneous parts in $\chi_n=QR$ gives
\[
Q_d(\lambda,w)R_{n-d}(\lambda,w)=\det(\lambda I+wB).
\]
Since $Q_d$ is homogeneous of degree $d$ and even in $w$, the polynomial
$q(\lambda)=Q_d(\lambda,1)$ satisfies
\[
q(-\lambda)=Q_d(-\lambda,1)=Q_d(-\lambda,-1)
=(-1)^dQ_d(\lambda,1)=(-1)^dq(\lambda).
\]
If $d$ were odd, then $q(0)=0$.  Specializing the displayed factorization at
$w=1$ would then imply $\det B=0$.  On the other hand, for an even
zero-diagonal tridiagonal matrix one has
\[
\det B=(-1)^{n/2}b_1^2b_3^2\cdots b_{n-1}^2\ne0,
\]
a contradiction.  Hence $d$ is even.
\end{proof}

\section{Elementary reducibility mechanisms and the principal stratum} \label{sec:principal}
Put $c_i=b_i^2$.  When some couplings vanish, we first cut the path at all
zero couplings and call the resulting maximal intervals the \emph{connected
components} of the chain.

\begin{proposition}\label{prop:elementary}
The following operations give factorizations of the spectral polynomial.
\begin{enumerate}
\item If $b_i=0$, then
\[
\chi_n=\chi_i^{(1,\ldots,i)}\,
\chi_{n-i}^{(i+1,\ldots,n)}.
\]
\item If the characteristic polynomial $\chi_I$ of a connected component
$I$ satisfies $\chi_I(-a,w)\equiv0$, then $\lambda+a$ divides $\chi_I$.
\item Suppose a connected component $I=[r,s]$ satisfies
\[
a_{r+k}=a_{s-k},\qquad b_{r+k}^2=b_{s-k-1}^2
\]
for all admissible $k$.  After a diagonal sign conjugation, the block
commutes with reversal.  Hence $\chi_I$ is the product of the characteristic
polynomials of its restrictions to the two reversal eigenspaces.
\item If a connected component $I=[r,s]$ of length at least two has scalar
diagonal part,
\[
a_r=\cdots=a_s=a,
\]
then, if $\mu_1,\ldots,\mu_{|I|}$ are the eigenvalues of the constant
off-diagonal matrix $B_I$, counted with algebraic multiplicity,
\[
\chi_I(\lambda,w)=\prod_{j=1}^{|I|}(\lambda+a+\mu_j w).
\]
\end{enumerate}
\end{proposition}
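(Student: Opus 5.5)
The plan is to verify the four items one at a time. Each follows from a standard fact about determinants or about commuting matrices, so the work is mostly bookkeeping of indices and signs.

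For (1), if $b_i=0$ then $\lambda I+A+wB$ is block diagonal. The blocks are indexed by $\{1,\dots,i\}$ and $\{i+1,\dots,n\}$, and the determinant factors as the product of the two block determinants. Equivalently, in the continuant recurrence \eqref{eq:continuant} the term $tb_i^2P_{i-1}$ drops out at step $i+1$. Each block is a Jacobi pencil of the same shape, so the factors are the spectral polynomials of the subchains. Iterating this cut gives $\chi_n=\prod_I\chi_I$ over the connected components, and items (2)--(4) are then statements about a single component.

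For (2), I regard $\chi_I$ as an element of $\C[w][\lambda]$ that is monic in $\lambda$. Division by the monic linear polynomial $\lambda+a$ leaves the remainder $\chi_I(-a,w)$. This remainder vanishes identically by hypothesis, so $\lambda+a$ divides $\chi_I$ in $\C[w][\lambda]\subset\C[\lambda,w]$.

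For (3), let $D=\diag(\varepsilon_r,\dots,\varepsilon_s)$ with $\varepsilon_j=\pm1$. Conjugation by $D$ replaces $b_j$ by $\varepsilon_j\varepsilon_{j+1}b_j$. By hypothesis $b_{r+k}=\pm b_{s-k-1}$, so the signs can be chosen edge by edge, propagating from one end, to make $b_{r+k}=b_{s-k-1}$ exactly. The one point to check is that the choices are consistent. Each reversal orbit consists of two distinct edges, except for the central edge when $|I|$ is even, and that edge is its own mirror with no condition imposed. Since the $\varepsilon_j$ can be chosen freely along the path, any prescribed sign change on each edge is achievable. After this conjugation the block $M=A_I+wB_I$ satisfies $M_{r+p,r+q}=M_{s-p,s-q}$, which says $JMJ=M$ for the reversal permutation matrix $J$. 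Since $J^2=I$, the eigenspaces $V_\pm$ of $J$ are invariant under $M$ and do not depend on $w$. In a basis adapted to $\C^{|I|}=V_+\oplus V_-$, the matrix $M$ is block diagonal, so $\chi_I=\det(\lambda+M|_{V_+})\det(\lambda+M|_{V_-})$, and the determinant is unchanged by conjugation.

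For (4), the block is $aI+wB_I$, and $aI$ commutes with $B_I$. I triangularize $B_I$ by a constant matrix $U$, so that $U^{-1}B_IU$ is upper triangular with diagonal $\mu_1,\dots,\mu_{|I|}$. Then $U^{-1}(\lambda I+aI+wB_I)U$ is upper triangular with diagonal entries $\lambda+a+\mu_jw$, and its determinant is the stated product.

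The only step needing any care is the sign bookkeeping in (3); the other three items are immediate.
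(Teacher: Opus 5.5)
Your proposal is correct and follows the paper's proof. The paper also treats (1) and (2) as immediate, proves (3) by a diagonal sign conjugation that makes the couplings reversal-invariant so the block preserves the $\pm1$ eigenspaces of the reversal, and proves (4) from the splitting of the characteristic polynomial of $B_I$ over $\C$. You simply supply more detail (the tree argument for the sign choices, the explicit check $JMJ=M$), and your triangularization of $B_I$ in (4) is equivalent to the paper's homogenization of $\det(xI+B_I)=\prod_j(x+\mu_j)$.
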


\begin{proof}
The first two assertions are immediate.  For the third, on a connected path the equalities of squared couplings allow
a diagonal conjugation by signs so that the transformed couplings are
invariant under reversal.  The transformed Jacobi block has the same
characteristic polynomial, commutes with the reversal operator, and hence
preserves its $+1$ and $-1$ eigenspaces.

For the fourth assertion,
\[
\chi_I(\lambda,w)=\det((\lambda+a)I+wB_I).
\]
The characteristic polynomial of $B_I$ splits over $\C$, and homogenizing it
gives the displayed product.  No diagonalizability assumption on $B_I$ is
needed.
\end{proof}

Put
\[
c_i=b_i^2,\qquad 1\le i\le n-1.
\]
The polynomial $P_n$ depends on the couplings only through the $c_i$.  We
therefore work, when convenient, in the squared-coupling parameter space and
set
\[
\mathcal U_n=\{(a,c)\in\C^{2n-1}:a_i\ne a_j\ (i\ne j),\
c_1\cdots c_{n-1}\ne0\}.
\]
Let $\mathcal R_n\subset\mathcal U_n$ denote the reducible locus.  Thus
$\mathcal U_n$ removes cuts and diagonal collisions while retaining the
connected reducibility phenomena that are the main subject of the paper.
The passage from $b_i$ to $c_i=b_i^2$ is a finite \'etale covering over the
connected stratum, so codimensions are unchanged.

One useful consequence of connectedness is the absence of constant branches
in even degree.  Indeed, if $n=2m$, then the coefficient of $t^m$ in
$P_{2m}$ is
\[
(-1)^m c_1c_3\cdots c_{2m-1}\ne0.
\]
Hence $P_{2m}(-a,t)$ cannot vanish identically for any constant $a$.  On
$\mathcal U_{2m}$, Proposition~\ref{prop:even-factor-degree} gives the
stronger statement that every factor has even $\lambda$-degree.

For odd $n=2m+1$, a prescribed constant branch $\lambda+a_j$ is equivalent
to
\[
P_n(-a_j,t)\equiv0.
\]
Since the constant term already vanishes, this is the system obtained by
setting to zero the $m$ non-constant coefficients of $P_n(-a_j,t)$.  We do
not assume that these equations are independent on every stratum.

\section{Reducibility in low degrees}\label{sec:low}
Throughout this section we put
\[
t=w^2,\qquad P_n(\lambda,t)=\chi_n(\lambda,w).
\]

\subsection{Degrees two and three}

\begin{proposition}\label{prop:d2}
For
\[
\chi_2=(\lambda+a_1)(\lambda+a_2)-b_1^2w^2
\]
the spectral curve is reducible if and only if either $b_1=0$ or
$a_1=a_2$.
\end{proposition}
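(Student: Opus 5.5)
We prove the two implications separately; the substance is that a quadratic in $\lambda$ factors exactly when its discriminant is a perfect square in $\C[w]$.

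\emph{Sufficiency.} If $b_1=0$, then $\chi_2=(\lambda+a_1)(\lambda+a_2)$. If $a_1=a_2=a$, then $\chi_2=(\lambda+a)^2-b_1^2w^2=(\lambda+a-b_1w)(\lambda+a+b_1w)$, which is also item~(4) of Proposition~\ref{prop:elementary}. In both cases $\chi_2$ is reducible.

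\emph{Necessity.} Since $\chi_2$ is monic of degree two in $\lambda$, any non-trivial factorization in $\C[\lambda,w]$ is a product of two monic factors of $\lambda$-degree one. By Gauss's lemma, no factor lies purely in $\C[w]$, because $\chi_2$ is monic in $\lambda$. Hence $\chi_2$ is reducible exactly when it has a root $\lambda=\ell(w)\in\C[w]$. By the quadratic formula this holds exactly when the discriminant
\[
\Delta(w)=(a_1-a_2)^2+4b_1^2w^2
\]
is a square in $\C[w]$.

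So suppose $\Delta(w)=p(w)^2$. If $b_1\ne0$, then $\Delta$ has degree two, so $p$ is linear. Write $p=\alpha w+\beta$. Comparing coefficients gives
\[
\alpha^2=4b_1^2,\qquad 2\alpha\beta=0,\qquad \beta^2=(a_1-a_2)^2 .
\]
Since $\alpha\ne0$, we get $\beta=0$, and hence $a_1=a_2$. Equivalently, $\Delta$ is a square only if it has a double root. For $\mu w^2+\nu$ with $\mu\ne0$ that forces $\nu=0$, which is the same computation as the argument for irreducibility of the quadratic factor in the proof of Theorem~\ref{thm:generic}. Therefore either $b_1=0$ or $a_1=a_2$.

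This argument has no real obstacle. The only point that needs care is that reducibility in $\C[\lambda,w]$ is equivalent to having a polynomial root in $\lambda$, and this follows from monicity in $\lambda$.
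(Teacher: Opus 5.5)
Your proof is correct and follows the paper's argument: a monic quadratic in $\lambda$ factors in $\C[\lambda,w]$ exactly when its discriminant $(a_1-a_2)^2+4b_1^2w^2$ is a square in $\C[w]$, and for $b_1\ne0$ this forces $a_1=a_2$. You do not need Gauss's lemma to exclude factors lying in $\C[w]$, since any such factor divides the leading $\lambda$-coefficient $1$ and is therefore a unit.
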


\begin{proof}
As a monic quadratic in $\lambda$, $\chi_2$ is reducible exactly when its
discriminant
\[
(a_1-a_2)^2+4b_1^2w^2
\]
is a square in $\C[w]$.  A polynomial $\alpha+\beta w^2$ with
$\alpha\beta\ne0$ is not a square in $\C[w]$.  Hence reducibility occurs
exactly when $b_1=0$ or $a_1=a_2$.  In the latter case
\[
\chi_2=(\lambda+a_1-b_1w)(\lambda+a_1+b_1w).
\]
\end{proof}

\begin{proposition}\label{prop:d3}
Assume $b_1b_2\ne0$.  Then $\chi_3$ is reducible if and only if it has a
constant branch.  More explicitly, this happens if and only if
\[
a_1=a_3
\]
or
\[
(a_3-a_2)b_1^2+(a_1-a_2)b_2^2=0.
\]
If $b_1=0$ or $b_2=0$, the chain is disconnected and $\chi_3$ is reducible.
\end{proposition}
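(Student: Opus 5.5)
Since $\chi_3$ is monic of degree three in $\lambda$, any non-trivial factorization has a factor of $\lambda$-degree one. So reducibility is the same as the existence of a monic linear factor $\lambda+\ell(w)$. The plan is to show that $\ell$ must be constant, so that reducibility is equivalent to $P_3(-a,t)\equiv0$ for some constant $a$. Then I will compute this condition explicitly from the continuant recurrence~\eqref{eq:continuant}. The disconnected case is immediate from Proposition~\ref{prop:elementary}(1).

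\emph{Step 1: linear factors are constant.} By the first part of Lemma~\ref{lem:degree-bound}, a monic linear factor has the form $\lambda+\alpha+\beta w$. I cannot use the evenness part of the lemma directly, because the $a_i$ need not be distinct. Instead I compare top homogeneous parts, as in the proof of Proposition~\ref{prop:even-factor-degree}. The factor $\lambda+\beta w$ must divide $\det(\lambda I+wB)$. For $n=3$,
\[
\det(\lambda I+wB)=\lambda^3-\lambda w^2(b_1^2+b_2^2).
\]
Its linear factors are $\lambda$ and $\lambda\pm w\sqrt{b_1^2+b_2^2}$.

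If $\beta\ne0$, then $\beta^2=b_1^2+b_2^2\ne0$. In that case I substitute $\lambda=-\alpha-\beta w$ into $P_3$ and expand in $w$. The $w^3$ coefficient cancels automatically, by the choice of $\beta$. The $w^2$ coefficient, together with the lower ones, must then vanish. I expect the $w^2$ coefficient to give a linear relation of the form $3\alpha-(a_1+a_2+a_3)=\text{(weighted combination)}$, and the lower coefficients to give further constraints. The goal is to derive a contradiction with $b_1b_2\ne0$, or else to land in a case where a constant branch also exists.

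This is the step I expect to be the main obstacle. A cleaner route would be to note that $\chi_3(\lambda,-w)=\chi_3(\lambda,w)$, so $\lambda+\alpha-\beta w$ is also a factor. If $\beta\ne0$, these two factors are distinct and coprime, so their product $(\lambda+\alpha)^2-\beta^2w^2$ divides $\chi_3$. The cofactor is then linear, and by the same reasoning it has the form $\lambda+\gamma+\delta w$. Evenness of $\chi_3$ forces this cofactor to be $\lambda+\gamma$ (no $w$ term), which is a constant branch anyway. So in every case reducibility yields a constant branch. The converse is trivial.

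\emph{Step 2: compute the constant-branch condition.} Expanding the recurrence gives
\[
P_3(\lambda,t)=(\lambda+a_1)(\lambda+a_2)(\lambda+a_3)-t\bigl[c_1(\lambda+a_3)+c_2(\lambda+a_1)\bigr].
\]
$P_3(-a,t)\equiv0$ requires $a\in\{a_1,a_2,a_3\}$ and also $c_1(a_3-a)+c_2(a_1-a)=0$.

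For $a=a_1$ the second condition reads $c_1(a_3-a_1)=0$, that is $a_1=a_3$. The case $a=a_3$ gives the same condition by symmetry. For $a=a_2$ it gives $(a_3-a_2)c_1+(a_1-a_2)c_2=0$. Together these are exactly the two stated alternatives, and the last sentence of the proposition is Proposition~\ref{prop:elementary}(1).
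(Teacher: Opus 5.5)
Your proof is correct and is essentially the paper's argument, once you drop the unfinished top-homogeneous-part detour in Step~1. Like the paper, you use the $w\mapsto -w$ symmetry of $\chi_3$ itself (not the distinctness-dependent evenness of factors) together with the bound $\deg q\le 1$ to obtain a $w$-invariant, hence constant, linear factor, and then read off the two alternatives from $P_3(-a,t)$ via the continuant expansion.
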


\begin{proof}
A non-trivial factorization of the monic cubic contains a monic
$\lambda$-degree-one factor
\[
L(\lambda,w)=\lambda+q(w).
\]
By Lemma~\ref{lem:degree-bound}, $\deg q\le1$.  Since $\chi_3$ is even in $w$,
$L(\lambda,-w)$ is also a factor.  If the two linear factors are distinct,
the remaining linear factor is invariant under $w\mapsto-w$; if they are the
same, $L$ itself is invariant.  In either case $\chi_3$ has an invariant
linear factor.  Its coefficient $q(w)$ is even and has degree at most one,
so it is constant.  Thus reducibility implies a constant branch.

The recurrence gives
\[
P_3=(\lambda+a_1)(\lambda+a_2)(\lambda+a_3)
-t\bigl(c_1(\lambda+a_3)+c_2(\lambda+a_1)\bigr).
\]
Consequently
\[
P_3(-a_1,t)=-t c_1(a_3-a_1),
\]
\[
P_3(-a_2,t)=-t\bigl(c_1(a_3-a_2)+c_2(a_1-a_2)\bigr),
\]
\[
P_3(-a_3,t)=-t c_2(a_1-a_3).
\]
Since $c_1c_2\ne0$, these identities give exactly the two alternatives in
the statement.
\end{proof}

\subsection{Degree four: a complete classification on $\mathcal U_4$}

Degree four is the first non-trivial even case on the principal stratum.
Here the whole connected pairwise-distinct reducible locus can be described
explicitly.

\begin{theorem}[Complete quartic classification]\label{thm:d4-complete}
Assume
\[
a_i\ne a_j\quad(i\ne j),\qquad c_1c_2c_3\ne0.
\]
Then $P_4(\lambda,t)$, equivalently $\chi_4(\lambda,w)$, is reducible if and
only if there exists
$\kappa\in\C^\times$ such that
\begin{equation}\label{eq:d4-param}
\begin{aligned}
c_1&=\kappa(a_1-a_4)(a_2-a_1),\\
c_2&=-\kappa(a_1-a_3)(a_2-a_4),\\
c_3&=\kappa(a_1-a_4)(a_4-a_3).
\end{aligned}
\end{equation}
In that case the factorization is
\begin{equation}\label{eq:d4-factor}
\begin{aligned}
P_4(\lambda,t)
={}&\Bigl[(\lambda+a_1)(\lambda+a_4)
+\kappa(a_1-a_4)^2t\Bigr]\\
&\times
\Bigl[(\lambda+a_2)(\lambda+a_3)
+\kappa(a_2-a_1)(a_4-a_3)t\Bigr].
\end{aligned}
\end{equation}
Thus every reducible quartic in $\mathcal U_4$ pairs the two endpoints
$\{1,4\}$ against the two middle vertices $\{2,3\}$ at $t=0$.
\end{theorem}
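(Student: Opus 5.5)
By Proposition~\ref{prop:even-factor-degree}, any non-trivial factorization on $\mathcal U_4$ is of type $2+2$. By Lemma~\ref{lem:degree-bound} both factors are even in $w$ and have the shape
\[
Q=\lambda^2+q_1\lambda+q_0+\alpha t,\qquad R=\lambda^2+r_1\lambda+r_0+\beta t,
\]
with $q_1,q_0,r_1,r_0,\alpha,\beta$ constants. The reason is that $\deg_t\widetilde q_1=0$ and $\deg_t\widetilde q_2\le1$. At $t=0$ the factor $Q$ specializes to $(\lambda+a_i)(\lambda+a_j)$ for a $2$-subset $S=\{i,j\}$, and $R$ to the complementary pair. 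This leaves three pairings up to swapping $Q$ and $R$: $\{1,2\}|\{3,4\}$, $\{1,3\}|\{2,4\}$ and $\{1,4\}|\{2,3\}$.

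I will expand $P_4$ by the continuant recurrence:
\[
P_4=\prod_{i=1}^4(\lambda+a_i)-t\bigl[c_1(\lambda+a_3)(\lambda+a_4)+c_2(\lambda+a_1)(\lambda+a_4)+c_3(\lambda+a_1)(\lambda+a_2)\bigr]+t^2c_1c_3.
\]
For $Q=F_0+\alpha t$ and $R=G_0+\beta t$, matching coefficients of $t^1$ and $t^2$ gives two conditions:
\[
\alpha G_0+\beta F_0=-\bigl[c_1(\lambda+a_3)(\lambda+a_4)+c_2(\lambda+a_1)(\lambda+a_4)+c_3(\lambda+a_1)(\lambda+a_2)\bigr],\qquad \alpha\beta=c_1c_3.
\]
The first is an identity of quadratics in $\lambda$. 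Its leading coefficient gives $\alpha+\beta=-(c_1+c_2+c_3)$. I will evaluate it at the four points $\lambda=-a_k$. This is cleaner than comparing coefficients, because at each such point one of $F_0,G_0$ vanishes and several terms on the right drop out.

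The key step is to run the case analysis over the three pairings. For $S=\{1,2\}$, evaluating at $\lambda=-a_1$ and $\lambda=-a_2$ leaves respectively $\alpha G_0(-a_1)=-c_1(a_3-a_1)(a_4-a_1)$ and $\alpha G_0(-a_2)=-c_1(a_3-a_2)(a_4-a_2)-c_2(a_1-a_2)(a_4-a_2)$. Evaluating at $\lambda=-a_3$ and $\lambda=-a_4$ gives $\beta F_0(-a_3)=\dots$ and $\beta F_0(-a_4)=-c_3(a_1-a_4)(a_2-a_4)$. Eliminating $\alpha$ and $\beta$ gives linear relations among the $c_i$. The $t^2$ condition then adds a quadratic relation. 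I expect that, for the pairings $\{1,2\}|\{3,4\}$ and $\{1,3\}|\{2,4\}$, these combine to force some $c_i=0$ or a diagonal coincidence, contradicting membership in $\mathcal U_4$. This exclusion is the main obstacle: it is the only place where a genuine elimination is needed rather than a verification. I will first try evaluating at the endpoint site in each case, since there only a single $c$ survives, and I expect that to pin $\alpha$ or $\beta$ directly.

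For the pairing $S=\{1,4\}$, evaluation at $\lambda=-a_1$ gives
\[
\beta(a_2-a_1)(a_3-a_1)=-c_1(a_3-a_1)(a_4-a_1),
\]
so $\beta=c_1(a_1-a_4)/(a_2-a_1)$. Evaluation at $\lambda=-a_4$ similarly gives $\beta=c_3(a_1-a_4)/(a_4-a_3)$. Evaluations at $\lambda=-a_2$ and $\lambda=-a_3$ express $\alpha$, and these two values must agree. Introducing $\kappa$ through $c_1=\kappa(a_1-a_4)(a_2-a_1)$, the equality of the two $\beta$ values forces $c_3=\kappa(a_1-a_4)(a_4-a_3)$, and then $\beta=\kappa(a_1-a_4)^2$. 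The $t^2$ equation $\alpha\beta=c_1c_3$ gives $\alpha=\kappa(a_2-a_1)(a_4-a_3)$. Substituting back into the $\lambda=-a_2$ evaluation solves for $c_2$ linearly, and I will check that it yields $c_2=-\kappa(a_1-a_3)(a_2-a_4)$ and that the $\lambda=-a_3$ equation is then automatic. Here $\kappa\ne0$ because $c_1\ne0$.

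For the converse, I will expand the product \eqref{eq:d4-factor} directly and compare it with $P_4$. By the analysis above, this reduces to the two identities just checked. Note that in \eqref{eq:d4-factor} the factor containing $\kappa(a_1-a_4)^2t$ is the one whose $t=0$ specialization is $F_0=(\lambda+a_1)(\lambda+a_4)$. Since the three pairings exhaust the possibilities and only $\{1,4\}|\{2,3\}$ survives, the concluding sentence of the theorem follows.
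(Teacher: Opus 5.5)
Your route is the paper's: you reduce to type $2+2$, write the factors as $F_0+\alpha t$ and $G_0+\beta t$, match the $t$- and $t^2$-coefficients, and go through the three pairings. Evaluating the $t$-identity at $\lambda=-a_k$ is just a convenient way of solving the same linear system.

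The one thing missing is the exclusion you flag as the main obstacle. You only predict it, but it is the whole content of the ``only if'' direction, so it must be written out. It closes in a line each using the evaluations you suggest.
\begin{itemize}
\item For $\{1,2\}\,|\,\{3,4\}$: $\lambda=-a_1$ gives $\alpha=-c_1$, and then $\lambda=-a_2$ reduces to $c_2(a_1-a_2)(a_4-a_2)=0$.
\item For $\{1,3\}\,|\,\{2,4\}$: the linear equations alone admit solutions with $c_1c_2c_3\ne0$, so the $t^2$ relation is essential. Evaluating at $\lambda=-a_1$ and $\lambda=-a_4$ gives $c_1=-\alpha(a_1-a_2)/(a_1-a_3)$ and $c_3=-\beta(a_3-a_4)/(a_2-a_4)$. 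Then $\alpha\beta=c_1c_3$ becomes
\[
\alpha\beta\,\frac{(a_1-a_4)(a_2-a_3)}{(a_1-a_3)(a_2-a_4)}=0,
\]
which is impossible because $c_1c_3\ne0$ forces $\alpha\beta\ne0$.
\end{itemize}

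Finally, in the $\{1,4\}$ paragraph you have interchanged $\alpha$ and $\beta$. At $\lambda=-a_1$ and $\lambda=-a_4$ it is $F_0$ that vanishes, so these evaluations determine $\alpha$, the $t$-coefficient of the factor specializing to $F_0$. With the labels swapped back, your computation gives $\alpha=\kappa(a_1-a_4)^2$ and $\beta=\kappa(a_2-a_1)(a_4-a_3)$. This matches your closing remark about which factor carries $\kappa(a_1-a_4)^2t$.
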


\begin{proof}
By Proposition~\ref{prop:even-factor-degree}, every non-constant factor has
even $\lambda$-degree.  Hence every non-trivial quartic factorization has
type $2+2$.  Lemma~\ref{lem:degree-bound} also shows that both factors are
even in $w$.  Therefore, after choosing an unordered partition
$S\sqcup T=\{1,2,3,4\}$ with $|S|=|T|=2$, one can write
\[
F=F_0+xt,\qquad G=G_0+yt,
\]
where
\[
F_0=\prod_{i\in S}(\lambda+a_i),\qquad
G_0=\prod_{i\in T}(\lambda+a_i).
\]
Comparing with
\[
\begin{aligned}
P_4={}&\prod_{i=1}^4(\lambda+a_i)\\
&-t\!\left[
c_1(\lambda+a_3)(\lambda+a_4)
+c_2(\lambda+a_1)(\lambda+a_4)
+c_3(\lambda+a_1)(\lambda+a_2)\right]
+t^2c_1c_3.
\end{aligned}
\]
gives a linear system for $c_1,c_2,c_3$ from the coefficient of $t$, together
with
\begin{equation}\label{eq:d4-xy}
xy=c_1c_3
\end{equation}
from the coefficient of $t^2$.

There are three unordered partitions.  For
$S=\{1,2\}$, $T=\{3,4\}$, the linear equations give
\[
c_1=-x,\qquad c_2=0,\qquad c_3=-y,
\]
which is impossible on the connected stratum.

For $S=\{1,3\}$, $T=\{2,4\}$, they give
\[
c_1=-x\,\frac{a_1-a_2}{a_1-a_3},\qquad
c_3=-y\,\frac{a_3-a_4}{a_2-a_4}.
\]
Substitution into \eqref{eq:d4-xy} yields
\[
xy\,\frac{(a_1-a_4)(a_2-a_3)}
{(a_1-a_3)(a_2-a_4)}=0.
\]
All four diagonal differences are non-zero, and connectedness forces
$x,y\ne0$, so this partition is impossible as well.

It remains to take $S=\{1,4\}$ and $T=\{2,3\}$.  The coefficient of $t$
gives
\[
c_1=x\,\frac{a_2-a_1}{a_1-a_4},\qquad
c_3=x\,\frac{a_4-a_3}{a_1-a_4},
\]
and
\[
c_2=-\frac{a_1y+a_2x-a_3x-a_4y}{a_1-a_4}.
\]
Equation~\eqref{eq:d4-xy} now gives
\[
y=x\,\frac{(a_2-a_1)(a_4-a_3)}{(a_1-a_4)^2}.
\]
Putting $\kappa=x/(a_1-a_4)^2$ gives exactly
\eqref{eq:d4-param} and \eqref{eq:d4-factor}.  Conversely, direct
multiplication of \eqref{eq:d4-factor} gives the continuant with the
couplings \eqref{eq:d4-param}, so the conditions are sufficient.
\end{proof}

\begin{corollary}[Geometry of the quartic reducible locus]
\label{cor:d4-geometry}
The locus $\mathcal R_4$ is smooth and irreducible of dimension $5$, hence of
codimension $2$ in $\mathcal U_4$.
\end{corollary}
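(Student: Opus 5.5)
By Theorem~\ref{thm:d4-complete}, $\mathcal R_4$ is the image of the map
\[
\Phi:\ \mathcal V\to\C^{7},\qquad (a_1,a_2,a_3,a_4,\kappa)\mapsto \bigl(a_1,a_2,a_3,a_4,c_1,c_2,c_3\bigr),
\]
with the $c_i$ given by \eqref{eq:d4-param}. Here $\mathcal V\subset\C^4\times\C^\times$ is the open set where the $a_i$ are pairwise distinct and the three products in \eqref{eq:d4-param} are non-zero. Those products consist of $\kappa$ times differences of distinct $a_i$. So $\mathcal V=\{a_i\ \text{pairwise distinct}\}\times\C^\times$, which is a connected smooth variety of dimension $5$. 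The plan is to show that $\Phi$ is a closed embedding onto a closed subvariety of $\mathcal U_4$. Irreducibility, smoothness and the dimension count then follow at once.

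First, $\Phi$ is injective and has a regular inverse on its image. The $a$-coordinates are recovered directly, and
\[
\kappa=\frac{c_1}{(a_1-a_4)(a_2-a_1)},
\]
where the denominator is non-vanishing on $\mathcal U_4$. Thus $\Phi$ is the graph of the regular map $a\mapsto(c_1,c_2,c_3)$ restricted along the $\kappa$-line. Equivalently, $\mathcal R_4$ is cut out of $\mathcal U_4$ by the two equations
\[
c_2\,(a_1-a_4)(a_2-a_1)+c_1(a_1-a_3)(a_2-a_4)=0,\qquad
c_3\,(a_2-a_1)-c_1(a_4-a_3)=0.
\]
These are obtained by eliminating $\kappa$. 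On $\mathcal U_4$, the conditions $c_1\ne0$ and distinct $a_i$ ensure that $\kappa:=c_1/((a_1-a_4)(a_2-a_1))$ lies in $\C^\times$ and reproduces all three formulas. So $\mathcal R_4$ is Zariski closed in $\mathcal U_4$.

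Next, for smoothness and codimension, the two equations are linear in $(c_2,c_3)$. Their Jacobian with respect to $(c_2,c_3)$ is
\[
\diag\bigl((a_1-a_4)(a_2-a_1),\ a_2-a_1\bigr),
\]
which is invertible on $\mathcal U_4$. Hence $\mathcal R_4$ is the graph of a regular function $(a,c_1)\mapsto(c_2,c_3)$ over the open set $\{a_i\ \text{distinct},\ c_1\ne0\}\subset\C^5$. This is also where the requirement $c_2c_3\ne0$ is automatically met, since each of $c_2,c_3$ is $c_1$ times a non-zero rational expression. A graph of a regular map over a smooth irreducible (connected open in $\C^5$) base is smooth and irreducible of dimension $5$, hence of codimension $2$ in the $7$-dimensional $\mathcal U_4$.

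The only point requiring care is the one just flagged: checking that the closed conditions automatically land in $\mathcal U_4$ (non-vanishing of $c_2,c_3$) and that the base is irreducible. The latter holds because the complement of a hypersurface arrangement in $\C^5$ is irreducible. No deeper obstacle is expected, since Theorem~\ref{thm:d4-complete} already gives the exact description of the locus.
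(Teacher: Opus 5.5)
Your proposal is correct and follows the paper's proof. Both arguments identify $\mathcal R_4$, via Theorem~\ref{thm:d4-complete}, with the image of $\{a_i\ \text{pairwise distinct}\}\times\C^\times$ under \eqref{eq:d4-param}, and both invert that map regularly with $\kappa=c_1/\bigl((a_1-a_4)(a_2-a_1)\bigr)$; your extra step of eliminating $\kappa$ to write $\mathcal R_4$ as a graph over $\{a_i\ \text{distinct},\ c_1\ne0\}$ just spells out the closedness and embedding behind the paper's phrase ``isomorphism onto its image''.
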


\begin{proof}
Theorem~\ref{thm:d4-complete} identifies $\mathcal R_4$ with the image of
\[
\{(a_1,a_2,a_3,a_4,\kappa):
a_i\ne a_j,\ \kappa\ne0\}
\]
under the map \eqref{eq:d4-param}.  The $a_i$ are unchanged, and
\[
\kappa=\frac{c_1}{(a_1-a_4)(a_2-a_1)},
\]
so the map is an isomorphism onto its image.  The parameter space on the
left is a smooth irreducible open subset of $\C^5$.
\end{proof}

\begin{remark}[Boundary strata]
Theorem~\ref{thm:d4-complete} concerns the principal stratum $\mathcal U_4$.
On its boundary, zero couplings and collisions among the diagonal entries
produce additional reducible families, including the reversal-symmetric and
scalar-diagonal families of Proposition~\ref{prop:elementary}.  Thus the
theorem is a complete classification of the connected pairwise-distinct
quartic locus, not a classification of every collision stratum.
\end{remark}

\section{Centrality of constant branches}\label{sec:central}

For odd $n=2m+1$ the smallest value of the expected codimension
$\kappa(n,d)$ of Section~\ref{sec:factor-strata} occurs at $d=1$, so
degree-one factors control the minimal codimension.  In this section we
determine completely which sites of the chain can carry such a factor: on the
principal stratum, only the central one can.

For $0\le k\le n+1$ write
\[
L_k=\chi^{(1,\ldots,k)},\qquad R_k=\chi^{(k,\ldots,n)}
\]
for the spectral polynomials of the initial and final sub-chains, with the
conventions
\[
L_0=R_{n+1}=1,\qquad L_{-1}=R_{n+2}=0,\qquad c_0=c_n=0 .
\]

\begin{lemma}[Vertex cut]\label{lem:vertex-cut}
For every $1\le j\le n$,
\begin{equation}\label{eq:vertex-cut}
P_n=(\lambda+a_j)L_{j-1}R_{j+1}
   -tc_{j-1}L_{j-2}R_{j+1}-tc_jL_{j-1}R_{j+2}.
\end{equation}
Consequently $\lambda+a_j$ divides $P_n$ in $\C[\lambda,t]$ if and only if
\begin{equation}\label{eq:const-branch}
\bigl(c_{j-1}L_{j-2}R_{j+1}+c_jL_{j-1}R_{j+2}\bigr)\Big|_{\lambda=-a_j}
\equiv0\quad\text{in }\C[t].
\end{equation}
\end{lemma}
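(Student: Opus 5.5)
The plan is to obtain \eqref{eq:vertex-cut} by a cofactor (Laplace) expansion of $\det(\lambda I+J_n(w))$ along row $j$, and then to read off the divisibility criterion by evaluating at $\lambda=-a_j$. First note that $L_k$ and $R_k$ satisfy the continuant recurrence \eqref{eq:continuant} read from the left and from the right, respectively. With the stated conventions, the boundary cases $j=1$ and $j=n$ come out correctly. For instance, when $j=1$ the term with $c_0=0$ drops, and the formula becomes $P_n=(\lambda+a_1)R_2-tc_1R_3$, which is the recurrence run from the right end.

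For general $j$, expand along row $j$. This row has three nonzero entries: $\lambda+a_j$ in column $j$, and $wb_{j-1}$, $wb_j$ in columns $j\mp1$.
\begin{itemize}
\item The diagonal cofactor is block diagonal with blocks indexed by $\{1,\dots,j-1\}$ and $\{j+1,\dots,n\}$, giving $L_{j-1}R_{j+1}$.
\item For the entry in column $j-1$, delete row $j$ and column $j-1$. The resulting minor factors further after one more expansion along the column that now carries the single entry $wb_{j-1}$.
\item Equivalently, and more cleanly, I would argue combinatorially. The determinant of a tridiagonal matrix is a sum over matchings of the path graph, with each matched edge $i$ contributing $-tc_i$ and each unmatched vertex $i$ contributing $\lambda+a_i$. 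Partition these matchings according to whether vertex $j$ is unmatched, matched to $j-1$, or matched to $j+1$.
\item The three classes contribute $(\lambda+a_j)L_{j-1}R_{j+1}$, $-tc_{j-1}L_{j-2}R_{j+1}$ and $-tc_jL_{j-1}R_{j+2}$, since removing the covered vertices splits the path into two independent subpaths.
\end{itemize}
The matching expansion itself follows by induction from \eqref{eq:continuant}, or directly from the fact that the only permutations contributing to a tridiagonal determinant are products of disjoint adjacent transpositions.

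For the consequence, $\lambda+a_j$ divides $P_n$ in $\C[\lambda,t]=\C[t][\lambda]$ if and only if $P_n(-a_j,t)\equiv0$, because $\lambda+a_j$ is monic in $\lambda$ and we can divide in $\C[t][\lambda]$. Substituting $\lambda=-a_j$ into \eqref{eq:vertex-cut} kills the first term. What remains is $-t$ times the bracket in \eqref{eq:const-branch}. Since $\C[t]$ is a domain, dividing by $-t$ gives the criterion.

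The main obstacle is only bookkeeping at the ends: making sure that $L_{-1}=R_{n+2}=0$ and $c_0=c_n=0$ make the formula valid for $j=1$ and $j=n$, with the $j=2$ and $j=n-1$ cases using $L_0=R_{n+1}=1$. I would check these four cases explicitly against the recurrence.
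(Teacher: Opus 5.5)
Your proposal is correct and is essentially the paper's proof: the paper expands $\det(\lambda I+J_n(w))$ along row $j$ and then the resulting minors, and your matching expansion is the same computation organized by whether vertex $j$ is unmatched or matched to $j\mp1$; the divisibility criterion then follows, exactly as in the paper, by setting $\lambda=-a_j$ and cancelling $-t$ in $\C[t]$. One small correction to your cofactor sketch: after deleting row $j$ and column $j-1$, no column carries a single entry in general. Instead, the minor is block lower triangular, with diagonal blocks giving $L_{j-2}$, the scalar $wb_{j-1}$ (original row $j-1$, column $j$), and $R_{j+1}$. Your matching argument makes this point moot.
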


\begin{proof}
Expand the determinant along the $j$th row and then the corresponding
minors.  The three contributions are
\[
(\lambda+a_j)L_{j-1}R_{j+1},\qquad
-w^2b_{j-1}^2L_{j-2}R_{j+1},\qquad
-w^2b_j^2L_{j-1}R_{j+2},
\]
which gives \eqref{eq:vertex-cut}.  Setting $\lambda=-a_j$ proves
\eqref{eq:const-branch}.
\end{proof}

\begin{lemma}[Coprimality of consecutive sub-chain polynomials]
\label{lem:coprime}
Let $(a,c)\in\mathcal U_n$, fix $j$, and specialize $\lambda=-a_j$.  Then, in
$\C[t]$,
\[
\gcd\bigl(L_{j-1},L_{j-2}\bigr)=1,\qquad
\gcd\bigl(R_{j+1},R_{j+2}\bigr)=1 .
\]
\end{lemma}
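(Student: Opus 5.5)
The plan is to argue via the three-term continuant recurrence \eqref{eq:continuant}, specialized at $\lambda=-a_j$, and to run a Euclidean-algorithm style descent. Treat $L$ first; the statement for $R$ follows by applying the same argument to the reversed chain, whose initial sub-chains are exactly the $R_k$ and whose couplings are still non-zero.

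Write $\ell_k(t)=L_k(-a_j,t)\in\C[t]$. The recurrence gives
\[
\ell_k=(a_k-a_j)\ell_{k-1}-tc_{k-1}\ell_{k-2},\qquad \ell_0=1,\ \ell_1=a_1-a_j .
\]
Suppose an irreducible polynomial, i.e.\ a linear factor $t-\tau$, divides both $\ell_{j-1}$ and $\ell_{j-2}$. The descent then has two cases.

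\emph{Case $\tau\ne0$.} Since $c_{k-1}\ne0$, the relation $tc_{k-1}\ell_{k-2}=(a_k-a_j)\ell_{k-1}-\ell_k$ shows that a common root $\tau$ of $\ell_k$ and $\ell_{k-1}$ is also a root of $\ell_{k-2}$. Inducting downward, $\tau$ is a root of $\ell_0=1$, which is a contradiction.

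\emph{Case $\tau=0$.} This step needs separate care, because division by $t$ fails there. Use instead that $\ell_k(0)=\prod_{i\le k}(a_i-a_j)$. This is non-zero for every $k\le j-1$, since the $a_i$ are pairwise distinct. Hence $t$ divides neither $\ell_{j-1}$ nor $\ell_{j-2}$.

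The two cases together give $\gcd=1$. The boundary indices ($j=1$, where $L_{-1}=0$ and $L_0=1$, and $j=2$) are trivial because one of the polynomials is the constant $1$; the $\gcd$ convention with $0$ then gives $1$ as well. The same conventions handle $R_{n+1}=1$ and $R_{n+2}=0$.

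The only genuine subtlety I expect is the $t=0$ case. This is precisely where the distinct-diagonal hypothesis on $\mathcal U_n$ enters, while the non-vanishing of the couplings is what drives the descent at $\tau\ne0$.
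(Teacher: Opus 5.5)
Your proposal is correct and follows the paper's proof: the paper likewise rules out a common factor of $t$ using $L_k(-a_j,0)=\prod_{i\le k}(a_i-a_j)\ne0$ for $k<j$, then descends via $tc_{k-1}L_{k-2}=(\lambda+a_k)L_{k-1}-L_k$ with $c_{k-1}\ne0$ down to $L_0=1$, and obtains the $R$-statement by reversing the chain. Working with a common root $\tau$ instead of with the gcd $g$ is only a cosmetic difference.
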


\begin{proof}
The left identity is immediate for $j\le2$.  Assume $j\ge3$ and put
$g=\gcd(L_{j-1},L_{j-2})$.  Since
$L_k(-a_j,0)=\prod_{i\le k}(a_i-a_j)\ne0$ for $k<j$, we have $g(0)\ne0$,
so $g$ is coprime to $t$.  From
\[
tc_{j-2}L_{j-3}=(\lambda+a_{j-1})L_{j-2}-L_{j-1}
\]
and $c_{j-2}\ne0$ it follows that $g\mid L_{j-3}$.  Repeating the argument
gives $g\mid L_0=1$.  The right identity follows by reversing the chain.
\end{proof}

\begin{theorem}[Mirror criterion and centrality]\label{thm:central}
Let $n\ge2$, $(a,c)\in\mathcal U_n$, and $1\le j\le n$.
\begin{enumerate}
\item The polynomial $\lambda+a_j$ divides $\chi_n$ if and only if
$2\le j\le n-1$ and there exist $\sigma,\tau\in\C^\times$ such that, as
identities in $\C[t]$ after setting $\lambda=-a_j$,
\begin{equation}\label{eq:mirror}
L_{j-1}=\sigma R_{j+1},\qquad
L_{j-2}=\tau R_{j+2},\qquad
c_{j-1}\tau+c_j\sigma=0 .
\end{equation}
\item If $\chi_n$ has a factor of $\lambda$-degree one, then $j-1=n-j$.  In
particular $n$ is odd, and the only possible degree-one factor of $\chi_n$ on
$\mathcal U_n$ is
\[
\lambda+a_{(n+1)/2}.
\]
\end{enumerate}
\end{theorem}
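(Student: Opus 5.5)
For part (1), I would start from Lemma~\ref{lem:vertex-cut}. Throughout, set $\lambda=-a_j$ and work in $\C[t]$. By \eqref{eq:const-branch}, divisibility is equivalent to
\[
c_{j-1}L_{j-2}R_{j+1}=-c_jL_{j-1}R_{j+2}.
\]

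\emph{Endpoints.} For $j=1$ we have $c_0=0$ and $L_0=1$, so the condition reduces to $c_1R_3\equiv0$. But $R_3(-a_1,0)=\prod_{i\ge3}(a_i-a_1)\ne0$ and $c_1\ne0$, a contradiction. The case $j=n$ is the same after reversing the chain. Hence $2\le j\le n-1$.

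\emph{Interior sites.} Lemma~\ref{lem:coprime} gives $\gcd(L_{j-1},L_{j-2})=1$. Since $L_{j-1}$ divides the left-hand side and $c_{j-1}\ne0$, this forces $L_{j-1}\mid R_{j+1}$. Symmetrically, $\gcd(R_{j+1},R_{j+2})=1$ forces $R_{j+1}\mid L_{j-1}$. So $L_{j-1}$ and $R_{j+1}$ are associates: $L_{j-1}=\sigma R_{j+1}$ with $\sigma\in\C^\times$. Both are non-zero at $t=0$, since the $a_i$ are distinct.

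Cancelling $R_{j+1}$ from the identity gives $c_{j-1}L_{j-2}=-c_j\sigma R_{j+2}$. Put $\tau=-c_j\sigma/c_{j-1}$. Then $L_{j-2}=\tau R_{j+2}$ and $c_{j-1}\tau+c_j\sigma=0$. The converse is immediate by substituting \eqref{eq:mirror} into \eqref{eq:const-branch}.

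For part (2), a $\lambda$-degree-one monic factor is even in $w$ by Lemma~\ref{lem:degree-bound}. It has $t$-degree at most $\lfloor 1/2\rfloor=0$, so it is a constant branch $\lambda+a$. Specializing at $w=0$ gives $a=a_j$ for some $j$. Part (1) then gives $2\le j\le n-1$ and the mirror relations. Write $p=j-1\ge1$ and $q=n-j\ge1$ for the lengths of the two half-chains.

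The key input is a degree count in $t$ at fixed $\lambda$. A chain of length $k$ has $t$-degree at most $\lfloor k/2\rfloor$. For even $k$ the degree is exactly $k/2$, because the top coefficient is $(-1)^{k/2}$ times the product of alternate couplings, independent of $\lambda$ and non-zero on $\mathcal U_n$. For odd $k$ the top coefficient is linear in $\lambda$ and may vanish, so only the upper bound is available. The relations $L_{j-1}=\sigma R_{j+1}$ and $L_{j-2}=\tau R_{j+2}$ equate the degrees of chains of lengths $(p,q)$ and $(p-1,q-1)$.

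I would then split by parities.
\begin{itemize}
\item If $p$ and $q$ are both even, the first relation gives $p/2=q/2$.
\item If $p$ is even and $q$ is odd, the first relation gives $p/2\le (q-1)/2$, that is $p\le q-1$. The second relation pairs the odd length $p-1$ with the even length $q-1$, giving $(q-1)/2\le (p-2)/2$, that is $q\le p-1$. These two inequalities are contradictory.
\item The case $p$ odd, $q$ even is symmetric.
\item If $p$ and $q$ are both odd, the second relation pairs the even lengths $p-1$ and $q-1$, giving $p=q$.
\end{itemize}
The convention $L_0=R_{n+1}=1$ handles length zero consistently. Hence $p=q$, so $n=2j-1$ is odd and $j=(n+1)/2$.

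The main obstacle I expect is exactly the possible degree drop in odd-length chains at $\lambda=-a_j$. The plan circumvents it by always arguing from the even-length member of each pair, whose degree is exact. I would double-check that step carefully, and in particular verify the leading-coefficient formula for even-length sub-chains via the continuant recurrence \eqref{eq:continuant}.
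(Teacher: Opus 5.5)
Your proposal is correct and follows essentially the same route as the paper: for (1), the coprimality and associates argument via Lemma~\ref{lem:coprime}, and for (2), the parity-based $t$-degree count that relies on exact degrees of even-length sub-chains. The differences are minor and, if anything, tighten the paper's write-up:
\begin{itemize}
\item you obtain $\tau$ by cancelling $R_{j+1}$ rather than repeating the divisibility argument;
\item you make explicit, via Lemma~\ref{lem:degree-bound}, that a degree-one factor must be a constant branch $\lambda+a_j$;
\item in the mixed-parity case you derive the contradiction $p\le q-1\le p-2$ from the two mirror relations, whereas the paper notes that $n$ is then even and that the $\lambda$-independent coefficient $(-1)^{n/2}c_1c_3\cdots c_{n-1}$ of $t^{n/2}$ in $P_n$ excludes every constant branch.
\end{itemize}
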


\begin{proof}
(1) Assume \eqref{eq:const-branch}.  First, $j\ne1$: for $j=1$ the
conventions $L_{-1}=0$, $L_0=1$, $c_0=0$ reduce \eqref{eq:const-branch} to
$c_1R_3(-a_1,t)\equiv0$, impossible since $c_1\ne0$ and
$R_3(-a_1,0)=\prod_{i\ge3}(a_i-a_1)\ne0$.  The case $j=n$ is symmetric.  So
$2\le j\le n-1$, and each of $L_{j-1},L_{j-2},R_{j+1},R_{j+2}$ is a product
of non-zero differences at $t=0$, hence non-zero in $\C[t]$.

Rewrite \eqref{eq:const-branch} as
\[
c_{j-1}L_{j-2}R_{j+1}=-c_jL_{j-1}R_{j+2}.
\]
The polynomial $L_{j-1}$ divides the left-hand side and is coprime to
$L_{j-2}$ by Lemma~\ref{lem:coprime}, so $L_{j-1}\mid R_{j+1}$.
Symmetrically $R_{j+1}$ divides the right-hand side and is coprime to
$R_{j+2}$, so $R_{j+1}\mid L_{j-1}$.  Hence $L_{j-1}=\sigma R_{j+1}$ with
$\sigma\in\C^\times$.  The same argument applied to the pair
$(L_{j-2},R_{j+2})$ gives $L_{j-2}=\tau R_{j+2}$ with $\tau\in\C^\times$.
Substituting and cancelling the non-zero factor $R_{j+1}R_{j+2}$ leaves
$c_{j-1}\tau+c_j\sigma=0$.  The converse is immediate.

(2) Put $\ell=j-1$ and $r=n-j$, the lengths of the two sub-chains, so that
$\ell+r=n-1$.  We use the following consequence of \eqref{eq:continuant}: the
spectral polynomial of a connected chain of length $k$ is
\[
\sum_{M}(-t)^{|M|}\Bigl(\prod_{e\in M}c_e\Bigr)
\prod_{v\notin V(M)}(\lambda+a_v),
\]
the sum being over matchings $M$ of the path on $k$ vertices.  If $k$ is
even the path has a unique perfect matching, so the coefficient of $t^{k/2}$
is $\pm c_1c_3\cdots c_{k-1}\ne0$ and is independent of $\lambda$; hence
after any specialization of $\lambda$ the $t$-degree is exactly $k/2$.  If
$k$ is odd, every maximum matching leaves one vertex uncovered, so the
coefficient of $t^{(k-1)/2}$ has $\lambda$-degree one and the $t$-degree
after specializing $\lambda$ is at most $(k-1)/2$.

If $\ell$ and $r$ are both even, then $\deg_tL_{j-1}=\ell/2$ and
$\deg_tR_{j+1}=r/2$ exactly, so the first relation in \eqref{eq:mirror}
forces $\ell=r$.  If $\ell$ and $r$ are both odd, then $\ell-1$ and $r-1$
are even, so $\deg_tL_{j-2}=(\ell-1)/2$ and $\deg_tR_{j+2}=(r-1)/2$ exactly,
and the second relation in \eqref{eq:mirror} forces $\ell=r$.  If $\ell$ and
$r$ have opposite parities then $n=\ell+r+1$ is even, and $\chi_n$ has no
degree-one factor at all: the coefficient of $t^{n/2}$ in $P_n$ is
$(-1)^{n/2}c_1c_3\cdots c_{n-1}\ne0$ and independent of $\lambda$, so
$P_n(-a,t)\not\equiv0$ for every constant $a$.

Thus $\ell=r$, i.e. $n$ is odd and $j=(n+1)/2$.
\end{proof}

\begin{remark}
For $n=3$ Theorem~\ref{thm:central} recovers the second alternative of
Proposition~\ref{prop:d3}: the only admissible site is $j=2$, and
\eqref{eq:mirror} reads $L_1=\sigma R_3$, $L_0=\tau R_4$, i.e.
$a_1-a_2=\sigma(a_3-a_2)$ and $1=\tau$, together with
$c_1+c_2\sigma=0$; eliminating $\sigma$ gives
$c_1(a_3-a_2)+c_2(a_1-a_2)=0$.  For $n=5$ the theorem says that only $j=3$
is possible, and an exact Gr\"obner computation confirms that
$\mathcal R_{5,\{j\}}=\emptyset$ for $j\ne3$ while
$\mathcal R_{5,\{3\}}\ne\emptyset$.
\end{remark}

\begin{corollary}[The constant-branch stratum]\label{cor:const-stratum}
Let $n=2m+1$ with $m\ge1$ and let $j=m+1$.  On $\mathcal U_n$ the stratum
$\mathcal R_{n,\{j\}}$ is the zero locus of the coefficients of
\[
c_mL_{m-1}R_{m+2}+c_{m+1}L_mR_{m+3}\Big|_{\lambda=-a_{m+1}}\in\C[t],
\]
a polynomial of $t$-degree at most $m-1$; this is a system of $m$ equations.
Equivalently, in the form \eqref{eq:mirror}, normalizing $\sigma$ and $\tau$
by the constant terms leaves
\[
\Bigl\lfloor\tfrac m2\Bigr\rfloor+\Bigl\lfloor\tfrac{m-1}2\Bigr\rfloor+1=m
\]
equations.  Both presentations therefore consist of exactly
$\kappa(n,1)=m$ equations, in agreement with
Proposition~\ref{prop:kappa}.
\end{corollary}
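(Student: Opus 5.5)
The plan has three parts: identify the stratum with the vanishing of the constant-branch polynomial, count its $t$-degree, and then count the mirror-form equations separately.

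\emph{Step 1 (identification).} By Lemma~\ref{lem:degree-bound}, a factor with specialization subset $\{j\}$ is monic of degree one and even in $w$, hence equal to $\lambda+a_j$ itself. So $\mathcal R_{n,\{j\}}$ is exactly the locus where $\lambda+a_j$ divides $P_n$. By Theorem~\ref{thm:central} only $j=m+1$ can occur. Lemma~\ref{lem:vertex-cut} then says that divisibility holds if and only if
\[
E(t):=\bigl(c_mL_{m-1}R_{m+2}+c_{m+1}L_mR_{m+3}\bigr)\big|_{\lambda=-a_{m+1}}
\]
vanishes identically. (The factor $-t$ in \eqref{eq:vertex-cut} has been divided out, which is why the constant term of $P_n(-a_j,t)$ drops.)

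\emph{Step 2 (degree count).} I use the matching expansion from the proof of Theorem~\ref{thm:central}. A chain of length $k$ gives a polynomial of $t$-degree at most $\lfloor k/2\rfloor$. The four sub-chains have lengths
\[
L_{m-1}:\ m-1,\qquad R_{m+2}:\ m,\qquad L_m:\ m,\qquad R_{m+3}:\ m-1.
\]
Each product therefore has $t$-degree at most $\lfloor (m-1)/2\rfloor+\lfloor m/2\rfloor=m-1$. Hence $E$ has at most $m$ coefficients, and setting them to zero gives $m$ equations. This agrees with the count in Section~\ref{sec:principal}, where $P_n(-a_j,t)$ has $m$ non-constant coefficients.

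\emph{Step 3 (mirror form).} Take the conditions \eqref{eq:mirror} at $\lambda=-a_{m+1}$. The polynomials $L_m$ and $R_{m+2}$ come from chains of length $m$, so they have degree at most $\lfloor m/2\rfloor$. Their constant terms
\[
\prod_{i\le m}(a_i-a_{m+1}),\qquad \prod_{i\ge m+2}(a_i-a_{m+1})
\]
are non-zero, so $\sigma$ is forced to be the ratio of these constant terms. The identity $L_m=\sigma R_{m+2}$ then amounts to $\lfloor m/2\rfloor$ equations, one for each positive power of $t$. Likewise $L_{m-1}=\tau R_{m+3}$ fixes $\tau$ by its constant terms and imposes $\lfloor (m-1)/2\rfloor$ equations. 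The relation $c_m\tau+c_{m+1}\sigma=0$ contributes one more. Using $\lfloor m/2\rfloor+\lfloor (m-1)/2\rfloor=m-1$, the total is $m$.

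Finally I compare with $\kappa(n,1)=\lceil 1\cdot 2m/2\rceil=m$. The one point needing care is that this is only a count of equations, not a claim that they are independent, and the statement should be read that way. I also need to cite Proposition~\ref{prop:kappa}, which appears later in the paper. Beyond that the argument is bookkeeping; the only real obstacle is checking the sub-chain lengths against the index conventions $L_k=\chi^{(1,\ldots,k)}$ and $R_k=\chi^{(k,\ldots,n)}$.
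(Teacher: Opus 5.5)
Your proposal is correct and follows the route the paper takes implicitly (the corollary has no separate proof there). Lemma~\ref{lem:vertex-cut} identifies the stratum with $E\equiv0$, the matching bound $\deg_t\le\lfloor k/2\rfloor$ on the four sub-chains gives $t$-degree at most $m-1$, and the mirror count follows by fixing $\sigma,\tau$ from the non-zero constant terms. Two small citation points: the equivalence with \eqref{eq:mirror} comes from Theorem~\ref{thm:central}(1), and part (2) is not needed since $j=m+1$ is fixed; in Step~1, it is the bound $\deg_w q_1\le1$ together with evenness, not evenness alone, that forces the factor to be $\lambda+a_j$.
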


Condition \eqref{eq:mirror} says that, at
$\lambda=-a_{m+1}$, the two half-chains have proportional spectral
polynomials at two consecutive truncation levels.

\section{The endpoint family and the peeling identity}\label{sec:endpoint}

The quartic family of Theorem~\ref{thm:d4-complete} extends to every
degree.  Its factorization is governed by an exact \emph{peeling identity}:
a quadratic factor splits off and the complementary factor is the spectral
polynomial of the chain obtained by deleting the two end sites.  The resulting
family realizes the expected codimension for the splitting $2+(n-2)$.

\subsection{The peeling identity}

Fix $n\ge3$ and free parameters $a_1,\ldots,a_n,\kappa$.  For $1\le m\le n$
introduce two chains, in both of which the displayed coupling is the one
joining the sites carrying $a_k$ and $a_{k+1}$:
\[
\mathcal D_m:\quad\text{diagonal }(a_1,\ldots,a_m),\qquad
c_k=\kappa(a_{k+1}-a_1)(a_k-a_n),
\]
\[
\mathcal E_m:\quad\text{diagonal }(a_2,\ldots,a_m),\qquad
\varepsilon_k=\kappa(a_k-a_1)(a_{k+1}-a_n),\quad 2\le k\le m-1.
\]
Thus $\mathcal E_m$ is obtained from $\mathcal D_m$ by deleting the first site
and interchanging the roles of the two reference values $a_1$ and $a_n$ in the
couplings.  Let $D_m$ and $E_m$ be the corresponding spectral polynomials,
with the conventions $D_0=1$, $E_1=1$, $E_0=0$.

\begin{lemma}[Two-term relation]\label{lem:two-term}
For every $m\ge1$,
\begin{equation}\label{eq:two-term}
D_m=(\lambda+a_1)E_m+\kappa(a_1-a_m)(a_1-a_n)\,t\,E_{m-1}.
\end{equation}
\end{lemma}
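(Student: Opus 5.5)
The plan is to prove \eqref{eq:two-term} by induction on $m$, using only the continuant recurrence \eqref{eq:continuant} for both chains:
\[
D_m=(\lambda+a_m)D_{m-1}-tc_{m-1}D_{m-2},\qquad
E_m=(\lambda+a_m)E_{m-1}-t\varepsilon_{m-1}E_{m-2}.
\]
The second recurrence holds for $m\ge3$. It also holds at $m=2$ with the conventions $E_1=1$, $E_0=0$, because $E_1=1$ plays the role of the empty chain preceding site $2$.

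\textbf{Base cases.} For $m=1$ the identity reads $D_1=\lambda+a_1$, since $E_1=1$ and $E_0=0$. For $m=2$ we have
\[
D_2=(\lambda+a_1)(\lambda+a_2)-t\kappa(a_2-a_1)(a_1-a_n).
\]
The right-hand side of \eqref{eq:two-term} is
\[
(\lambda+a_1)(\lambda+a_2)+\kappa(a_1-a_2)(a_1-a_n)t,
\]
so the two agree. Two base cases are needed because the recurrence for $D_m$ looks two steps back.

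\textbf{Inductive step ($m\ge3$).*}
\begin{enumerate}
\item Substitute the hypothesis for $D_{m-1}$ and $D_{m-2}$ into the recurrence for $D_m$.
\item On the target side, expand $E_m$ by its recurrence. In the correction term $\kappa(a_1-a_m)(a_1-a_n)tE_{m-1}$, expand $E_{m-1}=(\lambda+a_{m-1})E_{m-2}-t\varepsilon_{m-2}E_{m-3}$ once.
\item Both sides are now written in terms of $(\lambda+a_1)(\lambda+a_m)E_{m-1}$, $tE_{m-2}$ with a coefficient linear in $\lambda$, and $t^2E_{m-3}$. Compare coefficients.
\end{enumerate}
The $(\lambda+a_1)(\lambda+a_m)E_{m-1}$ terms match trivially. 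On the left, the only other $E_{m-1}$ contribution is $-t(\lambda+a_1)c_{m-1}E_{m-2}$, coming from $D_{m-1}$.

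\textbf{Coefficient of $tE_{m-2}$.} We must verify
\[
-(\lambda+a_1)c_{m-1}+\kappa(a_1-a_n)(a_1-a_{m-1})(\lambda+a_m)
=-(\lambda+a_1)\varepsilon_{m-1}+\kappa(a_1-a_m)(a_1-a_n)(\lambda+a_{m-1}).
\]
This is a polynomial identity of degree one in $\lambda$. It suffices to check it at $\lambda=-a_1$ and, for instance, at $\lambda=-a_m$. Both checks reduce to the explicit formulas $c_{m-1}=\kappa(a_m-a_1)(a_{m-1}-a_n)$ and $\varepsilon_{m-1}=\kappa(a_{m-1}-a_1)(a_m-a_n)$.

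\textbf{Coefficient of $t^2E_{m-3}$.} We need
\[
c_{m-1}(a_1-a_{m-2})=(a_1-a_m)\varepsilon_{m-2}.
\]
Both sides equal $\kappa(a_m-a_1)(a_{m-2}-a_1)(a_{m-1}-a_n)$. This is the point where the ``interchange of the reference values $a_1,a_n$'' in the definition of $\mathcal E_m$ is used.

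\textbf{Main obstacle.} The only real difficulty is bookkeeping. One must track the $\lambda$-linear coefficient of $tE_{m-2}$ carefully and handle the edge case $m=3$. There $E_0=0$, so the $t^2E_{m-3}$ terms vanish on both sides. Also $\varepsilon_{1}$ never occurs, because $\varepsilon_{m-2}$ multiplies $E_{m-3}=E_0$, so the definition of $\varepsilon_k$ for $k\ge2$ suffices. If the hand expansion becomes messy, a fallback is to verify the $tE_{m-2}$ identity symbolically: it is linear in $\lambda$ and involves only $a_1,a_{m-1},a_m,a_n,\kappa$.
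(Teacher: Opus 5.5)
Your proposal is correct and follows essentially the same route as the paper: induction via the continuant recurrences for $D_m$, $E_m$, $E_{m-1}$, with the $t^2E_{m-3}$ terms cancelling by $(a_1-a_m)(a_{m-2}-a_1)=(a_m-a_1)(a_1-a_{m-2})$, and the $tE_{m-2}$ coefficient reducing to the same identity, linear in $\lambda$, that the paper checks. Two small fixes are needed. First, you dropped a sign: both sides of $c_{m-1}(a_1-a_{m-2})=(a_1-a_m)\varepsilon_{m-2}$ equal $-\kappa(a_m-a_1)(a_{m-2}-a_1)(a_{m-1}-a_n)$. Second, checking the $tE_{m-2}$ identity only at $\lambda=-a_1$ and $\lambda=-a_m$ requires $a_1\ne a_m$, but the lemma is stated for arbitrary parameters; either treat the $a_i$ as indeterminates or compare the $\lambda$-coefficient and the constant term directly.
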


\begin{proof}
The cases $m=1,2$ are immediate.  For $m\ge3$ apply the continuant
recurrence to $D_m,E_m,E_{m-1}$ and use the induction hypotheses for
$D_{m-1}$ and $D_{m-2}$.  Put
\[
A=a_1,\qquad x=a_{m-1},\qquad y=a_m,\qquad z=a_n.
\]
After eliminating $E_{m-1}$, the coefficient of $E_{m-3}$ cancels because
$(A-y)(a_{m-2}-A)=(y-A)(A-a_{m-2})$.  The remaining coefficient is
$\kappa t$ times
\[
\begin{aligned}
&(A-z)\bigl[(\lambda+y)(A-x)-(A-y)(\lambda+x)\bigr]\\
&\quad+(\lambda+A)\bigl[(x-A)(y-z)-(y-A)(x-z)\bigr].
\end{aligned}
\]
The two brackets are $(y-x)(\lambda+A)$ and $(y-x)(z-A)$, respectively,
so the expression vanishes.
\end{proof}

\begin{theorem}[Peeling identity]\label{thm:peel}
Let $n\ge3$, let $a_1,\ldots,a_n$ and $\kappa$ be arbitrary, and put
\begin{equation}\label{eq:endpoint-c}
c_k=\kappa\,(a_{k+1}-a_1)(a_k-a_n),\qquad 1\le k\le n-1 .
\end{equation}
Then
\begin{equation}\label{eq:peel}
P_n(\lambda,t)=
\Bigl[(\lambda+a_1)(\lambda+a_n)+\kappa(a_1-a_n)^2\,t\Bigr]\,
\widetilde P_{n-2}(\lambda,t),
\end{equation}
where $\widetilde P_{n-2}$ is the spectral polynomial of the chain with
diagonal $(a_2,\ldots,a_{n-1})$ and squared couplings
\begin{equation}\label{eq:peeled-c}
\widetilde c_k=\kappa\,(a_{k+1}-a_1)(a_{k+2}-a_n),\qquad 1\le k\le n-3 .
\end{equation}
\end{theorem}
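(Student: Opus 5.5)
The plan is to derive \eqref{eq:peel} directly from the two-term relation of Lemma~\ref{lem:two-term}, taken at the top level $m=n$, and then to observe that the auxiliary chain $\mathcal E_n$ degenerates at its last coupling.

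\textbf{Step 1: apply the two-term relation at $m=n$.} With the couplings \eqref{eq:endpoint-c}, the chain $\mathcal D_n$ is exactly our chain, so $D_n=P_n$. Lemma~\ref{lem:two-term} with $m=n$ gives
\[
P_n=(\lambda+a_1)E_n+\kappa(a_1-a_n)^2\,t\,E_{n-1},
\]
because the prefactor $(a_1-a_m)(a_1-a_n)$ becomes $(a_1-a_n)^2$ when $m=n$.

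\textbf{Step 2: the last coupling of $\mathcal E_n$ vanishes.} The chain $\mathcal E_n$ has diagonal $(a_2,\ldots,a_n)$. Its final coupling is
\[
\varepsilon_{n-1}=\kappa(a_{n-1}-a_1)(a_n-a_n)=0.
\]
So the site carrying $a_n$ is cut off, by Proposition~\ref{prop:elementary}(1) or directly from the continuant recurrence \eqref{eq:continuant}. This gives $E_n=(\lambda+a_n)E_{n-1}$. Substituting into Step 1 yields
\[
P_n=\bigl[(\lambda+a_1)(\lambda+a_n)+\kappa(a_1-a_n)^2t\bigr]E_{n-1}.
\]

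\textbf{Step 3: identify $E_{n-1}$ with $\widetilde P_{n-2}$.} The chain $\mathcal E_{n-1}$ has diagonal $(a_2,\ldots,a_{n-1})$ and couplings $\varepsilon_k=\kappa(a_k-a_1)(a_{k+1}-a_n)$ for $2\le k\le n-2$. Relabelling $k\mapsto k+1$ to index by position in the peeled chain gives exactly \eqref{eq:peeled-c} for $1\le k\le n-3$. Hence $E_{n-1}=\widetilde P_{n-2}$.

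\textbf{Main obstacle.} There is no real algebraic difficulty, since all of it is contained in Lemma~\ref{lem:two-term}. The point needing care is bookkeeping of indices and conventions. $E_m$ lives on $m-1$ sites, with $E_1=1$ and $E_0=0$, and its recurrence must be applied with the shifted coupling labels. I will also check the edge case $n=3$ separately: there $\widetilde P_1=\lambda+a_2$ and $E_2=\lambda+a_2$, and \eqref{eq:peel} can be verified by hand against the cubic formula in Proposition~\ref{prop:d3}.
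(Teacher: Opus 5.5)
Your proposal is correct and follows the paper's proof: apply Lemma~\ref{lem:two-term} at $m=n$, use $\varepsilon_{n-1}=0$ to get $E_n=(\lambda+a_n)E_{n-1}$, and identify $E_{n-1}$ with the peeled chain after shifting the coupling index by one. The separate hand check for $n=3$ is harmless but unnecessary, because the conventions $E_1=1$, $E_0=0$ already cover that case.
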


\begin{proof}
Take $m=n$ in Lemma~\ref{lem:two-term}.  The last coupling of
$\mathcal E_n$ is $\varepsilon_{n-1}=\kappa(a_{n-1}-a_1)(a_n-a_n)=0$, so
that chain is disconnected at its last edge and $E_n=(\lambda+a_n)E_{n-1}$.
Hence
\[
\begin{aligned}
D_n
&=(\lambda+a_1)(\lambda+a_n)E_{n-1}
  +\kappa(a_1-a_n)^2tE_{n-1}\\
&=\Bigl[(\lambda+a_1)(\lambda+a_n)
  +\kappa(a_1-a_n)^2t\Bigr]E_{n-1}.
\end{aligned}
\]
By \eqref{eq:endpoint-c} we have $D_n=P_n$, and $E_{n-1}$ is precisely the
chain with diagonal $(a_2,\ldots,a_{n-1})$ and couplings
\eqref{eq:peeled-c}.
\end{proof}

\subsection{Geometry of the endpoint family}

\begin{corollary}[An explicit reducible family of expected codimension]
\label{cor:endpoint-geometry}
For $n\ge3$ put
\[
\mathcal N_n=\Bigl\{(a,c)\in\mathcal U_n:\
c_k=\kappa(a_{k+1}-a_1)(a_k-a_n)\ (1\le k\le n-1)
\text{ for some }\kappa\in\C^\times\Bigr\}.
\]
Then $\mathcal N_n\subset\mathcal R_{n,\{1,n\}}$, and $\mathcal N_n$ is a
smooth irreducible locally closed subvariety of $\mathcal U_n$ isomorphic to
an open subset of $\C^{n+1}$.  In particular
\[
\dim\mathcal N_n=n+1,\qquad
\operatorname{codim}_{\mathcal U_n}\mathcal N_n=n-2=\kappa(n,2).
\]
\end{corollary}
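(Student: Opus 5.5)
The plan has three parts: show that $\mathcal N_n$ consists of reducible points of the right type, parameterize it, and read off the dimension.

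\emph{Membership in $\mathcal R_{n,\{1,n\}}$.} Let $(a,c)\in\mathcal N_n$. Theorem~\ref{thm:peel} gives the factorization \eqref{eq:peel}. The quadratic factor has specialization $(\lambda+a_1)(\lambda+a_n)$ at $t=0$, so it selects the subset $\{1,n\}$, and the factorization is non-trivial because $n\ge3$. I take $\mathcal R_{n,S}$ to mean the points of $\mathcal U_n$ admitting a monic factor with specialization $\prod_{i\in S}(\lambda+a_i)$, as in the proof of Lemma~\ref{lem:degree-bound}. With that reading, $\mathcal N_n\subset\mathcal R_{n,\{1,n\}}$.

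\emph{Parameterization.} Consider the morphism
\[
\Phi:(a,\kappa)\mapsto\bigl(a,\ \kappa(a_{k+1}-a_1)(a_k-a_n)\bigr)_{k}
\]
on the open set
\[
W=\Bigl\{(a,\kappa)\in\C^{n+1}:\ a_i\ne a_j\ (i\neq j),\ \kappa\ne0\Bigr\}.
\]
On $W$ every factor $(a_{k+1}-a_1)(a_k-a_n)$ is non-zero. The only possible exceptions would be $k+1=1$ or $k=n$, and these are outside the range $1\le k\le n-1$. Hence all $c_k\ne0$ and $\Phi(W)\subset\mathcal U_n$; by definition $\mathcal N_n=\Phi(W)$.

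The inverse on the image is
\[
\kappa=\frac{c_1}{(a_2-a_1)(a_1-a_n)},
\]
which is regular on the image. So $\Phi$ is injective with a regular inverse, exactly as in Corollary~\ref{cor:d4-geometry}. Moreover, $\Phi$ is the graph of a regular map to the $c$-coordinates: the $a$-coordinates and $\kappa$ are recovered from $(a,c_1)$. It follows that $\Phi(W)$ equals the closed subset of $\mathcal U_n$ cut out by
\[
c_k(a_2-a_1)(a_1-a_n)=c_1(a_{k+1}-a_1)(a_k-a_n),\qquad 2\le k\le n-1 .
\]
Here I need that on $\mathcal U_n$ these equations force $\kappa:=c_1/((a_2-a_1)(a_1-a_n))\neq0$, which holds because $c_1\ne0$.

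\emph{Dimension and codimension.} The locus $\mathcal N_n$ is therefore closed in $\mathcal U_n$, hence locally closed, and it is isomorphic to the smooth irreducible open set $W\subset\C^{n+1}$. Its dimension is $n+1$. Since $\dim\mathcal U_n=2n-1$, the codimension is $n-2$. Finally, $\kappa(n,2)=\lceil 2(n-2)/2\rceil=n-2$.

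The only step needing care is checking that $\Phi$ is an isomorphism onto a locally closed image rather than merely a bijection. Writing $\mathcal N_n$ explicitly as the graph-type subvariety defined by the $n-2$ displayed equations settles this.
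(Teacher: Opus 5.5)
Your proposal is correct and follows the paper's proof: you parameterize by $(a,\kappa)$ on the open set where the $a_i$ are distinct and $\kappa\neq0$, check that every $c_k\neq0$, and invert via $\kappa=c_1/((a_2-a_1)(a_1-a_n))$. Two steps the paper leaves implicit are filled in by your argument: the membership check in $\mathcal R_{n,\{1,n\}}$ via the peeling identity, and the explicit equations showing that $\mathcal N_n$ is closed in $\mathcal U_n$.
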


\begin{proof}
The set of $(a_1,\ldots,a_n,\kappa)$ with the $a_i$ pairwise distinct and
$\kappa\ne0$ is a smooth irreducible open subset of $\C^{n+1}$, and
\eqref{eq:endpoint-c} gives $c_k\ne0$ for all $k$, because $a_{k+1}\ne a_1$
for $k\ge1$ and $a_k\ne a_n$ for $k\le n-1$; so the image lies in
$\mathcal U_n$.  The map leaves the $a_i$ unchanged and satisfies
$\kappa=c_1/\bigl((a_2-a_1)(a_1-a_n)\bigr)$, hence is an isomorphism onto its
image.  Finally $\dim\mathcal U_n=2n-1$ and
$\kappa(n,2)=\lceil2(n-2)/2\rceil=n-2$.
\end{proof}

\begin{corollary}[Global upper bound in every degree]\label{cor:endpoint-upper}
For every $n\ge3$ the stratum $\mathcal R_{n,\{1,n\}}$ is non-empty and
\[
\operatorname{codim}_{\mathcal U_n}\mathcal R_n\le n-2 .
\]
\end{corollary}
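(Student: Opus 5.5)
The corollary should follow directly from Theorem~\ref{thm:peel} and Corollary~\ref{cor:endpoint-geometry}. The plan has two parts: first show that $\mathcal N_n$ is non-empty and sits inside $\mathcal R_{n,\{1,n\}}$, and then read off the codimension bound.

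\emph{Non-emptiness.} Choose any pairwise distinct $a_1,\ldots,a_n$ and any $\kappa\ne0$, and define the $c_k$ by \eqref{eq:endpoint-c}. As noted in the proof of Corollary~\ref{cor:endpoint-geometry}, every $c_k$ is non-zero:
\begin{itemize}
\item $a_{k+1}\ne a_1$ holds for $k\ge1$;
\item $a_k\ne a_n$ holds for $k\le n-1$.
\end{itemize}
So the point lies in $\mathcal U_n$. By Theorem~\ref{thm:peel}, $P_n$ then has the factor
\[
Q=(\lambda+a_1)(\lambda+a_n)+\kappa(a_1-a_n)^2t .
\]
This factor is non-trivial and proper because $n\ge3$, so the complementary factor $\widetilde P_{n-2}$ has $\lambda$-degree $n-2\ge1$. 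Hence $\chi_n$ is reducible in $\C[\lambda,w]$. At $w=0$, $Q$ specializes to $(\lambda+a_1)(\lambda+a_n)$, so its selected subset is $\{1,n\}$. This places the point in $\mathcal R_{n,\{1,n\}}$, which is therefore non-empty, with $\mathcal N_n\subset\mathcal R_{n,\{1,n\}}\subset\mathcal R_n$.

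\emph{Codimension bound.} By Corollary~\ref{cor:endpoint-geometry}, $\mathcal N_n$ is irreducible of dimension $n+1$ inside $\mathcal U_n$, which has dimension $2n-1$. The irreducible set $\mathcal N_n$ lies in some irreducible component $Z$ of $\mathcal R_n$. (Here $\mathcal R_n$ is locally closed in $\mathcal U_n$ as a finite union of the closed loci $\mathcal E_S$ from the proof of Theorem~\ref{thm:generic}, taken in $c$-coordinates.) Then $\dim Z\ge n+1$, so
\[
\operatorname{codim}_{\mathcal U_n}\mathcal R_n\le(2n-1)-(n+1)=n-2,
\]
where the codimension of $\mathcal R_n$ means the minimum over its components.

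\emph{Main obstacle.} The only point needing care is the convention for codimension of a reducible set. With the minimum-over-components convention, the bound follows as soon as a single component contains $\mathcal N_n$. I would state this convention explicitly, and note that irreducibility and dimension of $\mathcal N_n$ come from the explicit isomorphism with an open subset of $\C^{n+1}$ already established.
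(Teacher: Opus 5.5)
Your proposal is correct and follows the paper's route: the corollary is read off directly from Theorem~\ref{thm:peel} and Corollary~\ref{cor:endpoint-geometry}. The key point is that $\mathcal N_n\subset\mathcal R_{n,\{1,n\}}\subset\mathcal R_n$ has dimension $n+1$ inside the $(2n-1)$-dimensional $\mathcal U_n$.

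One small citation fix: closedness of $\mathcal R_n$ in $\mathcal U_n$, with the $a_i$ allowed to vary, is Proposition~\ref{prop:subset-closed}. It is not the loci $\mathcal E_S$ of Theorem~\ref{thm:generic}, which are defined for a fixed diagonal. The inequality $\dim\mathcal N_n\le\dim\mathcal R_n$ does not actually need this.
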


For even $n$ this recovers Corollary~\ref{cor:global-upper} without any
appeal to the spin family; for odd $n$ the bound $\kappa(n,1)=(n-1)/2$ of
Corollary~\ref{cor:global-upper}, coming from the central constant branch of
Theorem~\ref{thm:central}, is stronger.

Thus $\mathcal N_n$ already has the expected codimension for the splitting
$2+(n-2)$.  Its closure in $\mathcal U_n$ is an irreducible subvariety of
$\mathcal R_{n,\{1,n\}}$ of codimension $n-2$; in particular,
Conjecture~\ref{con:expected-codim} would imply that this closure is an
irreducible component of the endpoint stratum.

\begin{corollary}[Low-degree exactness]\label{cor:endpoint-low}
On the principal stratum,
\[
\mathcal N_3=\mathcal R_3,\qquad
\mathcal N_4=\mathcal R_4.
\]
\end{corollary}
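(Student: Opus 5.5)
For $n=4$ the plan is to compare the two explicit parameterizations.  Theorem~\ref{thm:d4-complete} identifies $\mathcal R_4$ with the image of \eqref{eq:d4-param}.  So it suffices to check that \eqref{eq:d4-param} coincides with \eqref{eq:endpoint-c} at $n=4$.  Reading off \eqref{eq:endpoint-c}:
\begin{itemize}
\item $c_1=\kappa(a_2-a_1)(a_1-a_4)$, which matches \eqref{eq:d4-param};
\item $c_2=\kappa(a_3-a_1)(a_2-a_4)=-\kappa(a_1-a_3)(a_2-a_4)$, which matches;
\item $c_3=\kappa(a_4-a_1)(a_3-a_4)=\kappa(a_1-a_4)(a_4-a_3)$, which matches.
\end{itemize}
The same $\kappa\in\C^\times$ works on both sides, so $\mathcal N_4=\mathcal R_4$ as subsets of $\mathcal U_4$.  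As a consistency check, the quadratic factor in \eqref{eq:d4-factor} is exactly the one in \eqref{eq:peel}.

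For $n=3$, the inclusion $\mathcal N_3\subset\mathcal R_3$ follows from Corollary~\ref{cor:endpoint-geometry}.  For the converse I would use Proposition~\ref{prop:d3}.  Since the $a_i$ are pairwise distinct, $a_1=a_3$ is excluded, so reducibility is equivalent to
\[
(a_3-a_2)c_1+(a_1-a_2)c_2=0 .
\]
Given such a point with $c_1c_2\ne0$, define $\kappa$ by
\[
\kappa=\frac{c_1}{(a_2-a_1)(a_1-a_3)},
\]
so that $c_1=\kappa(a_2-a_1)(a_1-a_3)$, which is the formula \eqref{eq:endpoint-c} for $k=1$.  Substituting into the linear relation gives
\[
c_2=-\frac{(a_3-a_2)c_1}{a_1-a_2}=\kappa(a_3-a_2)(a_1-a_3)\cdot\frac{a_2-a_1}{a_2-a_1}\cdot(-1)\cdots ,
\]
and this has to be simplified to $\kappa(a_3-a_1)(a_2-a_3)$, which is \eqref{eq:endpoint-c} for $k=2$, $n=3$.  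Indeed,
\[
-\frac{(a_3-a_2)(a_2-a_1)(a_1-a_3)}{a_1-a_2}=(a_3-a_2)(a_1-a_3)=(a_3-a_1)(a_2-a_3).
\]
The parameter $\kappa$ is non-zero because $c_1\ne0$.  Hence every point of $\mathcal R_3$ lies in $\mathcal N_3$.

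I expect no real obstacle here.  The only care needed is sign bookkeeping in matching the formulas.  I would also note that on $\mathcal N_3$ the complementary factor $\widetilde P_1=\lambda+a_2$ is exactly the central constant branch predicted by Theorem~\ref{thm:central}.
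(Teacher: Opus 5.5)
Your proposal is correct and follows the paper's proof. For $n=4$ you note that \eqref{eq:endpoint-c} coincides term by term with \eqref{eq:d4-param} and invoke Theorem~\ref{thm:d4-complete}; for $n=3$ you solve the relation of Proposition~\ref{prop:d3} for $c_2$ with $\kappa=c_1/((a_2-a_1)(a_1-a_3))$, exactly as the paper does. The only blemish is the garbled intermediate expression ending in $\cdots$, which read literally has the wrong sign. Drop it, since the clean identity $-\frac{(a_3-a_2)(a_2-a_1)(a_1-a_3)}{a_1-a_2}=(a_3-a_1)(a_2-a_3)$ that follows already completes the verification.
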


\begin{proof}
For $n=4$ this is Theorem~\ref{thm:d4-complete}, since
\eqref{eq:endpoint-c} is exactly \eqref{eq:d4-param}.  For $n=3$,
Proposition~\ref{prop:d3} gives
\[
c_1(a_3-a_2)+c_2(a_1-a_2)=0.
\]
Solving for $c_2$ and setting
$\kappa=c_1/((a_2-a_1)(a_1-a_3))$ gives precisely
\eqref{eq:endpoint-c}.
\end{proof}

\subsection{The spin family}

\begin{corollary}[Spin factorization]\label{cor:spin-family}
For $n\ge2$ and
\[
a_k=n+1-2k,\qquad c_k=k(n-k),
\]
one has, writing $n=2m$ or $n=2m+1$,
\[
P_{2m}(\lambda,t)=
\prod_{r=1}^{m}\bigl[\lambda^2-(2r-1)^2(1+t)\bigr],
\]
\[
P_{2m+1}(\lambda,t)=
\lambda\prod_{r=1}^{m}\bigl[\lambda^2-(2r)^2(1+t)\bigr].
\]
\end{corollary}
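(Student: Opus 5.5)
The plan is to apply the peeling identity (Theorem~\ref{thm:peel}) repeatedly, by induction on $n$ in steps of two.

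\textbf{Spin data lie on the endpoint family.} For $a_k=n+1-2k$ one has $a_{k+1}-a_1=-2k$ and $a_k-a_n=2(n-k)$. Hence
\[
\kappa(a_{k+1}-a_1)(a_k-a_n)=-4\kappa\,k(n-k),
\]
and the choice $\kappa=-1/4$ reproduces $c_k=k(n-k)$, so \eqref{eq:endpoint-c} holds. Since $a_1-a_n=2(n-1)$ and $a_1=-a_n=n-1$, the quadratic factor in \eqref{eq:peel} is
\[
(\lambda+n-1)(\lambda-n+1)-\tfrac14\cdot4(n-1)^2t=\lambda^2-(n-1)^2(1+t).
\]

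\textbf{The peeled chain is again a spin chain.} The peeled chain has diagonal $(a_2,\ldots,a_{n-1})$. After reindexing, its entries are $a_{k+1}=n-1-2k=(n-2)+1-2k$, which is the spin diagonal for $n-2$. Its couplings from \eqref{eq:peeled-c} are
\[
\widetilde c_k=-\tfrac14(a_{k+1}-a_1)(a_{k+2}-a_n)=-\tfrac14(-2k)\bigl(2(n-k-2)\bigr)=k(n-2-k),
\]
which are exactly the spin couplings for $n-2$. Therefore $P_n=[\lambda^2-(n-1)^2(1+t)]\,P^{\mathrm{spin}}_{n-2}$ whenever $n\ge3$.

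\textbf{Induction and base cases.} Iterating, the factors that appear are $\lambda^2-(n-1)^2(1+t)$, $\lambda^2-(n-3)^2(1+t)$, and so on. For $n=2m$ these are the odd squares $(2r-1)^2$, $r=1,\ldots,m$, and the last step uses $n=2$ directly: $P_2=(\lambda+1)(\lambda-1)-t=\lambda^2-(1+t)$. For $n=2m+1$ the squares are $(2r)^2$, $r=1,\ldots,m$, and the recursion ends at $n=1$ with $P_1=\lambda+a_1=\lambda$. The case $n=3$ is covered by the peeling identity, since Theorem~\ref{thm:peel} holds for arbitrary parameters, so the degenerate value $\widetilde P_1=\lambda$ causes no trouble. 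The theorem also allows vanishing couplings, so no genericity hypotheses need checking.

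The only step needing care is the reindexing of the peeled couplings, where the index shift $k\mapsto k+1$ must be carried out correctly; I expect no real obstacle.
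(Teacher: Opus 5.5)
Your proposal is correct and follows the paper's proof: you set $\kappa=-\tfrac14$ in the endpoint family, identify the peeled quadratic as $\lambda^2-(n-1)^2(1+t)$, recognize the peeled chain as the spin chain of length $n-2$, and iterate Theorem~\ref{thm:peel} down to the base cases $P_2=\lambda^2-(1+t)$ and $P_1=\lambda$. Your explicit computation of the reindexed couplings $\widetilde c_k=k(n-2-k)$ and your remark on the $n=3$ step supply details that the paper's proof leaves implicit.
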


\begin{proof}
The case $n=2$ is immediate.  For $n\ge3$, take
$\kappa=-\tfrac14$ in \eqref{eq:endpoint-c}.  Since
$a_{k+1}-a_1=-2k$ and $a_k-a_n=2(n-k)$, the couplings become
$c_k=k(n-k)$.  The quadratic factor in \eqref{eq:peel} is
\[
\lambda^2-(n-1)^2(1+t),
\]
and the peeled chain has the same form with $n$ replaced by $n-2$.
Iterating Theorem~\ref{thm:peel} gives the two product formulas.
\end{proof}

Thus the spin family is stable under peeling (up to lowering the length by
two).  A general member of $\mathcal N_n$ need not have this property, so its
complementary factor is typically not subject to a second peeling step.

\subsection{The real Jacobi locus and a rank symmetry}\label{sec:real}

The pencil $J_n(w)$ is a genuine Jacobi matrix for real $w$ exactly when the
$a_i$ are real and the $c_i=b_i^2$ are positive.  This locus carries extra
information, because a real symmetric tridiagonal matrix with non-vanishing
off-diagonal entries has simple spectrum, so no two eigenvalue branches ever
cross.

\begin{proposition}[Rank symmetry]\label{prop:rank-symmetry}
Let $a_1,\ldots,a_n\in\R$ be pairwise distinct, let $c_i>0$ for all $i$, and
suppose $\chi_n=QR$ with $Q$ monic in $\lambda$ of degree $d$,
$1\le d\le n-1$.  Let $S\subset\{1,\ldots,n\}$, $|S|=d$, be the subset with
$Q(\lambda,0)=\prod_{i\in S}(\lambda+a_i)$, and let $\pi$ be the permutation
with $a_{\pi(1)}>\cdots>a_{\pi(n)}$.  Then the rank set
$\pi^{-1}(S)$ is invariant under $r\mapsto n+1-r$.
\end{proposition}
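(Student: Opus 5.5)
The plan is to follow the roots of $Q(\cdot,w)$ along the real axis $w\in[0,\infty)$. The set of eigenvalue ranks these roots select cannot change along that path. We then read this rank set off twice: at $w=0$ it is $\pi^{-1}(S)$, and as $w\to+\infty$ it is controlled by the spectrum of $B$. The symmetry will come from the large-$w$ end, via the top homogeneous part of $Q$ already used in Proposition~\ref{prop:even-factor-degree}.

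\emph{Step 1: the rank set is constant.} For real $w\ne0$, $A+wB$ is real symmetric tridiagonal with non-zero off-diagonal entries $wb_i$, so its spectrum is simple. At $w=0$ it is simple as well, because the $a_i$ are distinct. Hence $\chi_n(\cdot,w)$ has $n$ distinct real roots for every real $w$. Since $\chi_n=QR$, the roots of $Q(\cdot,w)$ are exactly $d$ of them. Order the eigenvalues as $\mu_1(w)>\cdots>\mu_n(w)$; these are continuous and never collide. Define
\[
\rho(w)=\{r:\ -\mu_r(w)\text{ is a root of }Q(\cdot,w)\}.
\]
The roots of the monic polynomial $Q(\cdot,w)$ depend continuously on $w$, and the $\mu_r$ are pairwise separated on compact intervals. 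So $\rho$ is locally constant, hence constant on $[0,\infty)$. At $w=0$ we have $\mu_r(0)=a_{\pi(r)}$, so $\rho(0)=\pi^{-1}(S)$.

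\emph{Step 2: the large-$w$ limit.} By Lemma~\ref{lem:degree-bound}, $Q$ is even in $w$ and has total degree at most $d$. As in the proof of Proposition~\ref{prop:even-factor-degree}, its top homogeneous part satisfies $Q_d R_{n-d}=\det(\lambda I+wB)$. Moreover $q(\lambda)=Q_d(\lambda,1)$ has exact degree $d$, because $Q$ is monic, and satisfies $q(-\lambda)=(-1)^dq(\lambda)$.

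Let $\nu_1>\cdots>\nu_n$ be the eigenvalues of $B$. They are simple, since $B$ is a Jacobi matrix with non-zero couplings. They are also symmetric, $\nu_{n+1-r}=-\nu_r$, because $B$ is conjugate to $-B$ by $\diag(1,-1,1,\dots)$. The roots of $q$ are $-\nu_r$ for $r$ in some $d$-element set $M$. The relation $q(-\lambda)=\pm q(\lambda)$ makes the root set of $q$ invariant under negation, so $M$ is invariant under $r\mapsto n+1-r$.

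\emph{Step 3: matching the two ends.} Write $\mu_r(w)/w$ as the ordered eigenvalues of $B+w^{-1}A$. This is a continuous family in $s=1/w$ with simple spectrum at $s=0$, so $\mu_r(w)/w\to\nu_r$ as $w\to+\infty$. Similarly, the roots of $Q(\cdot,w)/w$, which are the roots of $w^{-d}Q(w\lambda,w)$, tend to the roots of $q$. Because the $\nu_r$ are distinct, for $w$ large the root of $Q$ near $-w\nu_r$ must be $-\mu_r(w)$. Hence $\rho(w)=M$ for large $w$. Combining with Step 1 gives $\pi^{-1}(S)=\rho(0)=M$, which is invariant under $r\mapsto n+1-r$.

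The main obstacle is this matching in Step 3. We must justify that the roots of $Q$ after rescaling by $1/w$ converge to those of $q$, which uses that $Q$ has total degree exactly $d$ with monic leading term, and that this identification is unambiguous. Distinctness of the $\nu_r$ handles the latter: each limit $-\nu_r$ attracts exactly one eigenvalue branch. Step 1 also needs care at $w=0$, but simplicity of the spectrum there is guaranteed by distinctness of the $a_i$.
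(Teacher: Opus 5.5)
Your proposal is correct and follows essentially the same route as the paper: the selected rank set is constant along the real $w$-axis, equals $\pi^{-1}(S)$ at $w=0$, and equals the root-index set of $q=Q_d(\cdot,1)$ as $w\to+\infty$, which is reversal-invariant by the parity of $q$ and the symmetric simple spectrum of $B$. The differences are cosmetic: you include $w=0$ in the connected interval directly, using distinctness of the $a_i$, instead of passing to the limit $w\to0^+$, and you write out the rescaling $w^{-d}Q(w\lambda,w)\to q(\lambda)$ explicitly.
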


\begin{proof}
For $w>0$ the matrix $A+wB$ is real symmetric tridiagonal with non-zero
off-diagonal entries, hence has $n$ simple real eigenvalues
$\mu_1(w)>\cdots>\mu_n(w)$.  The $d$ roots of $Q(\cdot,w)$ are among the $n$
simple roots $-\mu_r(w)$ of $\chi_n(\cdot,w)$, so the set of indices $r$
occurring is locally constant, hence constant, on the connected interval
$(0,\infty)$; call it $T$.  As $w\to0^+$ we have $\mu_r(0)=a_{\pi(r)}$, so
$T=\pi^{-1}(S)$.

By Lemma~\ref{lem:degree-bound}, $Q$ is even in $w$ and has total degree at
most $d$ in $(\lambda,w)$.  Let $Q_d$ be its homogeneous part of total degree
$d$; taking top homogeneous parts in $\chi_n=QR$ gives
$Q_dR_{n-d}=\det(\lambda I+wB)$.  Hence $q(\lambda):=Q_d(\lambda,1)$ divides
$\det(\lambda I+B)=\prod_r(\lambda+\nu_r)$, where $\nu_1>\cdots>\nu_n$ are
the eigenvalues of $B$, which are real and simple for the same reason.  Since
$Q_d$ is homogeneous of degree $d$ and even in $w$,
\[
q(-\lambda)=Q_d(-\lambda,1)=Q_d(-\lambda,-1)=(-1)^dQ_d(\lambda,1)
=(-1)^dq(\lambda),
\]
so the set of roots of $q$ is invariant under $\lambda\mapsto-\lambda$.
Conjugating $B$ by $\diag\bigl((-1)^k\bigr)$ sends $B$ to $-B$, so
$\operatorname{spec}B$ is symmetric, and being simple it satisfies
$\nu_{n+1-r}=-\nu_r$.  Therefore the index set
$T'=\{r:q(-\nu_r)=0\}$ is invariant under $r\mapsto n+1-r$.

Finally, $Q=Q_d+(\text{lower total degree})$ shows that as $w\to+\infty$ the
roots of $Q(\cdot,w)$ are asymptotic to $-w\nu_r$ with $r\in T'$, while
$\mu_r(w)\sim w\nu_r$; hence $T=T'$.
\end{proof}

\begin{corollary}\label{cor:real-endpoint}
The family $\mathcal N_n$ meets the real Jacobi locus exactly over those real
diagonals for which the $n-1$ numbers $(a_{k+1}-a_1)(a_k-a_n)$,
$1\le k\le n-1$, all have the same sign, with $\kappa$ chosen with that same sign.  For every such diagonal the rank set of $\{1,n\}$ is a pair
$\{r,n+1-r\}$, as it must be by Proposition~\ref{prop:rank-symmetry}.
\end{corollary}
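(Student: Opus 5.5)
The plan is to prove the two halves of Corollary~\ref{cor:real-endpoint} separately: first the description of which real diagonals admit a real Jacobi point of $\mathcal N_n$, then the statement about the rank set.

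For the first half, a point of $\mathcal N_n$ lies on the real Jacobi locus exactly when the $a_i$ are real and every $c_k=\kappa(a_{k+1}-a_1)(a_k-a_n)$ is real and positive. Each factor $(a_{k+1}-a_1)(a_k-a_n)$ is real and non-zero on $\mathcal U_n$. Taking $k=1$, the condition $c_1>0$ forces $\kappa=c_1/\bigl((a_2-a_1)(a_1-a_n)\bigr)$ to be real, with the sign of $(a_2-a_1)(a_1-a_n)$. Once $\kappa$ is real, the condition $c_k>0$ for all $k$ says exactly that every product $(a_{k+1}-a_1)(a_k-a_n)$ has the sign of $\kappa$. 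Conversely, if all these products share a sign, any real $\kappa$ of that sign gives positive $c_k$. This settles the first assertion.

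For the second half, we use Theorem~\ref{thm:peel}: on $\mathcal N_n$ the quadratic factor $Q=(\lambda+a_1)(\lambda+a_n)+\kappa(a_1-a_n)^2t$ divides $\chi_n$, and $Q(\lambda,0)=(\lambda+a_1)(\lambda+a_n)$, so $S=\{1,n\}$. All hypotheses of Proposition~\ref{prop:rank-symmetry} hold, with $d=2$. The proposition then says that $\pi^{-1}(\{1,n\})$ is a two-element subset of $\{1,\ldots,n\}$ invariant under $r\mapsto n+1-r$.

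The only step needing care is excluding the case where both elements are fixed points of the involution. The involution has at most one fixed point, namely $(n+1)/2$ for odd $n$, and the set has two distinct elements. So it must be a single orbit $\{r,n+1-r\}$ with $r\ne n+1-r$.

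As an independent sanity check, one could also read this off directly. The sign condition at $k=1$ and $k=n-1$ constrains where $a_1$ and $a_n$ sit in the ordering, which should reproduce the symmetric placement of ranks. This check is not needed, since Proposition~\ref{prop:rank-symmetry} already does the work. I expect no real obstacle: the main points are checking that $\kappa$ must be real and verifying the hypotheses of the proposition.
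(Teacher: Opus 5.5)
Your proof is correct and matches the argument the paper intends; the paper states the corollary without a separate proof. Positivity of the $c_k$ forces $\kappa$ to be real and gives the common-sign condition, while Theorem~\ref{thm:peel} together with Proposition~\ref{prop:rank-symmetry} for $S=\{1,n\}$, $d=2$, yields an invariant two-element rank set, and you rightly note this must be a single orbit $\{r,n+1-r\}$ because $r\mapsto n+1-r$ has at most one fixed point.

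Your optional direct check would need all $n-1$ sign conditions, not only those at $k=1$ and $k=n-1$. Carried out in full, it also shows that the common sign is always negative, so $\kappa<0$ on the real locus. As you say, it is not needed.
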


Proposition~\ref{prop:rank-symmetry} concerns the ranks of the diagonal
entries, whereas Conjecture~\ref{con:reversal} below concerns their positions
in the chain.  The two constraints are therefore independent; on the real
Jacobi locus they must hold simultaneously.

\section{Factorization strata and degree eight}\label{sec:factor-strata}

The quartic theorem suggests organizing reducibility by the factor selected
at $t=0$.  By Lemma~\ref{lem:degree-bound}, on $\mathcal U_n$ every factor of
$\chi_n$ is even in $w$; hence reducibility of $\chi_n$ in $\C[\lambda,w]$
is equivalent there to reducibility of $P_n$ in $\C[\lambda,t]$.  The
specialization at $t=0$ therefore labels factorization strata canonically.

\subsection{Subset strata and the incidence count}

Let $S\subset\{1,\ldots,n\}$ be non-empty and proper, and put $d=|S|$.
We denote by $\mathcal R_{n,S}$ the set of points of $\mathcal U_n$ for
which $P_n$ has a monic factor $F$ satisfying
\[
F(\lambda,0)=\prod_{i\in S}(\lambda+a_i).
\]
Replacing $S$ by its complement gives the same factorization, so we usually
take $d\le n/2$.  Because $P_n(\lambda,0)$ is square-free, the specialization
subset $S$ uniquely identifies the formal Hensel factor.  When $n$ is even,
Proposition~\ref{prop:even-factor-degree} shows that only even values of $d$
can occur.

\begin{proposition}\label{prop:subset-closed}
For every $S$, the subset stratum $\mathcal R_{n,S}$ is Zariski closed in
$\mathcal U_n$.  Consequently $\mathcal R_n$ is Zariski closed in
$\mathcal U_n$.
\end{proposition}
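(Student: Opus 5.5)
We follow the proof of Theorem~\ref{thm:generic}, but now treat both the diagonal and the couplings as variables on the open set $\mathcal U_n$. Fix a non-empty proper $S$ with $|S|=d$ and put
\[
F_0(\lambda)=\prod_{i\in S}(\lambda+a_i),\qquad G_0(\lambda)=\prod_{i\notin S}(\lambda+a_i).
\]
Their resultant is $\rho=\prod_{i\in S,\,j\notin S}(a_j-a_i)$, an invertible regular function on $\mathcal U_n$. Let $\mathcal O=\C[a,c][\Delta^{-1}]$ with $\Delta=\prod_{i<j}(a_i-a_j)$; this is the coordinate ring of the diagonal-distinct open set, and $\mathcal U_n$ is obtained from it by further inverting $c_1\cdots c_{n-1}$. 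We run Hensel lifting of $P_n=F_0G_0+O(t)$ in $\mathcal O[[t]][\lambda]$, solving at each order $t^k$ the B\'ezout equation $F_0\,\delta G+G_0\,\delta F=h_k$ with $\deg\delta F<d$ and $\deg\delta G<n-d$. This step only requires inverting $\rho$. The outcome is a unique formal factorization
\[
P_n=\widehat F_S\,\widehat G_S,
\]
with monic factors whose coefficients of $\lambda^{d-j}t^k$ lie in $\mathcal O$.

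Next we show that, pointwise, membership in $\mathcal R_{n,S}$ is equivalent to a polynomial condition on $\widehat F_S$. At any point $p\in\mathcal U_n$, specialization commutes with the Hensel construction, and Hensel uniqueness holds there because $\rho(p)\neq0$. So if $p\in\mathcal R_{n,S}$, the actual factor $F$ equals $\widehat F_S(p)$. Lemma~\ref{lem:degree-bound} then forces the coefficient of $\lambda^{d-j}t^k$ in $\widehat F_S(p)$ to vanish for $k>\lfloor j/2\rfloor$. Conversely, if all these coefficients vanish at $p$, then $\widehat F_S(p)$ is a monic polynomial in $\C[\lambda,t]$. Monic division gives $P_n=\widehat F_S(p)\,Q+R$ with $\deg_\lambda R<d$ and $Q,R\in\C[\lambda,t]$. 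Comparing with the formal factorization in $\C[[t]][\lambda]$ and using uniqueness of monic division there shows $R=0$. Hence $p\in\mathcal R_{n,S}$.

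It follows that $\mathcal R_{n,S}$ is the common zero locus in $\mathcal U_n$ of the elements $e_{j,k}\in\mathcal O$ with $k>\lfloor j/2\rfloor$. By Noetherianity of $\mathcal O$, finitely many of them generate the ideal they span, so the locus is Zariski closed in $\mathcal U_n$.

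For the consequence: on $\mathcal U_n$ a monic factor of $P_n$ determines its subset $S$ at $t=0$, because $P_n(\lambda,0)$ is square-free. By Lemma~\ref{lem:degree-bound}, reducibility of $\chi_n$ is the same as reducibility of $P_n$. Therefore $\mathcal R_n=\bigcup_S\mathcal R_{n,S}$, a finite union of closed sets, which is closed.

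The main point needing care is the compatibility of specialization with the formal Hensel factor. This depends on the denominators being powers of $\rho$ alone, and on uniqueness holding at every point of $\mathcal U_n$, not only generically. Both follow because $\rho$ is a unit on all of $\mathcal U_n$.
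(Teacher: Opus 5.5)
Your proof is correct and follows essentially the same route as the paper. Both arguments run Hensel lifting over a ring in which the resultant is a unit, then use the degree bound of Lemma~\ref{lem:degree-bound} to cut out $\mathcal R_{n,S}$ as the vanishing locus of the higher $t$-coefficients of $\widehat F_S$, use monic division for the converse, and take a finite union over $S$. You are somewhat more explicit than the paper about specialization commuting with the Hensel construction and about why reducibility of $\chi_n$ reduces to the subset strata of $P_n$.
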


\begin{proof}
Over $\mathcal U_n$ the two polynomials
\[
F_{0,S}(\lambda)=\prod_{i\in S}(\lambda+a_i),\qquad
G_{0,S}(\lambda)=\prod_{i\notin S}(\lambda+a_i)
\]
are coprime, and their resultant is a unit in the coordinate ring of
$\mathcal U_n$.  Hensel lifting therefore gives a unique formal factor
$\widehat F_S(\lambda,t)$ with specialization $F_{0,S}$.  Its coefficients
are regular functions on $\mathcal U_n$: the recursive construction divides
only by the above resultant.  By Lemma~\ref{lem:degree-bound}, an actual
polynomial factor of $\lambda$-degree $d=|S|$ has the coefficient of
$\lambda^{d-j}$ of $t$-degree at most $\lfloor j/2\rfloor$.  Thus
$\mathcal R_{n,S}$ is the common zero set of all higher $t$-coefficients of
$\widehat F_S$.  Conversely, if all these higher coefficients vanish,
$\widehat F_S$ is a polynomial; monic division of $P_n$ by it shows that it
is an actual factor in $\C[\lambda,t]$.  The coordinate ring of
$\mathcal U_n$ is Noetherian, so finitely many of the equations suffice.
Hence $\mathcal R_{n,S}$ is closed.  The reducible locus is a finite union of
such strata.
\end{proof}

For a monic factor of $\lambda$-degree $d$, Lemma~\ref{lem:degree-bound}
allows
\[
\nu(d):=\sum_{j=1}^d\left(\left\lfloor\frac j2\right\rfloor+1\right)
\]
bounded coefficients.  Explicitly,
\begin{equation}\label{eq:nu}
\nu(2r)=r(r+2),\qquad
\nu(2r+1)=r^2+3r+1.
\end{equation}

\begin{proposition}[Incidence count]\label{prop:kappa}
For a $d+(n-d)$ factorization on $\mathcal U_n$, the bounded
coefficient-matching incidence problem has
\[
(2n-1)+\nu(d)+\nu(n-d)
\]
variables and $\nu(n)$ coefficient equations.  Put
\begin{equation}\label{eq:kappa}
\kappa(n,d):=\nu(n)-\nu(d)-\nu(n-d)
=\left\lceil\frac{d(n-d)}2\right\rceil.
\end{equation}
Every irreducible component of a non-empty subset stratum
$\mathcal R_{n,S}$ has codimension at most $\kappa(n,d)$.  At a point of
its coefficient-matching incidence variety where the coefficient equations
have full row rank, the corresponding local branch of $\mathcal R_{n,S}$
has codimension exactly $\kappa(n,d)$.
\end{proposition}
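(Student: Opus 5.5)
The plan has three parts: count variables and equations, evaluate the difference $\nu(n)-\nu(d)-\nu(n-d)$ in closed form, and then read off the codimension statements from the dimension theory of the incidence variety. The work splits into one combinatorial identity and one geometric argument.

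\textbf{Setting up the incidence variety.} Let $\mathcal I_{n,S}$ be the variety of tuples $(a,c,F,G)$ with $(a,c)\in\mathcal U_n$, and $F$, $G$ monic in $\lambda$ of degrees $d$ and $n-d$. Their coefficients are polynomials in $t$ obeying the bounds of Lemma~\ref{lem:degree-bound}: the coefficient of $\lambda^{d-j}$ has $t$-degree at most $\lfloor j/2\rfloor$, and likewise for $G$. This gives $(2n-1)+\nu(d)+\nu(n-d)$ coordinates. The product $FG$ automatically satisfies the same bounds with $n$ in place of $d$, as does $P_n$ by the matching expansion in the proof of Theorem~\ref{thm:central}. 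Hence $P_n=FG$ amounts to exactly $\nu(n)$ scalar equations, one for each admissible pair $(j,k)$ with $k\le\lfloor j/2\rfloor$. We impose the open condition $F(\lambda,0)=\prod_{i\in S}(\lambda+a_i)$ by taking the component selected by $S$; since $P_n(\lambda,0)$ is square-free, the $t^0$ equations pin down $F(\cdot,0)$ and $G(\cdot,0)$ up to the finite choice of $S$.

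\textbf{The identity $\kappa(n,d)=\lceil d(n-d)/2\rceil$.} This follows from \eqref{eq:nu} by checking the parities of $d$ and $n-d$. If both are even, say $d=2r$ and $n-d=2s$, then $(r+s)(r+s+2)-r(r+2)-s(s+2)=2rs=d(n-d)/2$. The odd/even and odd/odd cases work the same way with $r^2+3r+1$. In the odd/odd case $d(n-d)$ is odd, and the $+1$ constants produce the ceiling. This is a routine check.

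\textbf{Codimension bounds.} The projection $\pi\colon\mathcal I_{n,S}\to\mathcal U_n$ has image $\mathcal R_{n,S}$. It is injective: by the Hensel uniqueness used in Proposition~\ref{prop:subset-closed}, $F=\widehat F_S$ and $G=P_n/F$ are determined by $(a,c)$. It is also finite, indeed a closed immersion locally, because the coefficients of $\widehat F_S$ are regular functions on $\mathcal U_n$. So $\pi$ identifies $\mathcal I_{n,S}$ with its image, and dimensions agree component by component.

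Now $\mathcal I_{n,S}$ sits in a smooth ambient space of dimension $(2n-1)+\nu(d)+\nu(n-d)$ and is cut out by $\nu(n)$ equations. Krull's principal ideal theorem therefore gives every nonempty component dimension at least $(2n-1)-\kappa(n,d)$, so every component of $\mathcal R_{n,S}$ has codimension at most $\kappa(n,d)$ in $\mathcal U_n$.

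At a point where the $\nu(n)\times(\text{variables})$ Jacobian has full row rank, the implicit function theorem makes $\mathcal I_{n,S}$ smooth there, of dimension exactly $(2n-1)-\kappa(n,d)$. Because $\pi$ is injective and finite, its image branch has codimension exactly $\kappa(n,d)$.

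The main obstacle is justifying that the equation count is exactly $\nu(n)$. This requires the product of two bounded polynomials to introduce no coefficients beyond the bound for degree $n$, and we need $\lfloor j/2\rfloor+\lfloor (k-j)/2\rfloor\le\lfloor k/2\rfloor$. That inequality holds, so I expect this step to go through once it is written out carefully. The other point needing care is that $\pi$ does not collapse dimension, which rests on Hensel uniqueness.
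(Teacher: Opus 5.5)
Your proposal is correct and follows essentially the same route as the paper. You count $(2n-1)+\nu(d)+\nu(n-d)$ variables against $\nu(n)$ equations and apply the height theorem to the incidence variety. You then use Hensel uniqueness to identify the $S$-piece with the graph of the regular Hensel coefficients over $\mathcal R_{n,S}$, so dimensions transfer, and you use full row rank of the Jacobian to get smoothness of the expected dimension. Your explicit checks that $\lfloor j/2\rfloor+\lfloor (k-j)/2\rfloor\le\lfloor k/2\rfloor$ and that $P_n$ obeys the same bounds justify that the equation count is exactly $\nu(n)$, a point the paper leaves implicit.
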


\begin{proof}
The chain contributes $2n-1$ variables $(a_1,\ldots,a_n,c_1,\ldots,c_{n-1})$.
The two monic factors contribute $\nu(d)$ and $\nu(n-d)$ coefficients.
Matching the coefficient of every monomial
$\lambda^{n-j}t^k$, with
$1\le j\le n$ and $0\le k\le\lfloor j/2\rfloor$, gives $\nu(n)$ equations.
By the height theorem, every non-empty irreducible component of the incidence
variety has dimension at least
\[
(2n-1)+\nu(d)+\nu(n-d)-\nu(n).
\]
Over $\mathcal U_n$ the incidence variety splits into pieces according to
the subset selected at $t=0$.  On the piece labelled by $S$, Hensel
uniqueness shows that the monic factor with specialization
$\prod_{i\in S}(\lambda+a_i)$ is unique.  Equivalently, this piece is the
graph over $\mathcal R_{n,S}$ of the bounded coefficients of the regular
Hensel factor from Proposition~\ref{prop:subset-closed}.  Hence the
projection to $\mathcal R_{n,S}$ preserves dimension, giving the asserted
upper bound on codimension.  If the coefficient Jacobian has full row rank,
the incidence variety is smooth of the displayed expected dimension at that
point, and equality follows for the corresponding local branch.  The
identity \eqref{eq:kappa} follows from \eqref{eq:nu} by separating the
parities of $n$ and $d$.
\end{proof}

The formula is exact in degree four:
Theorem~\ref{thm:d4-complete} gives
$\operatorname{codim}\mathcal R_4=\kappa(4,2)=2$.
Thus the quartic classification provides the first global instance in which
the incidence count is attained.

\subsection{Which subsets occur}\label{sec:which-subsets}

Proposition~\ref{prop:kappa} bounds the codimension of a non-empty subset
stratum but does not determine which subsets occur.  Let
\[
\iota(i)=n+1-i
\]
be reversal of the chain.  Reversing the Jacobi matrix carries
$\mathcal R_{n,S}$ isomorphically onto $\mathcal R_{n,\iota S}$.

The proved low-degree results are more restrictive.  By
Theorem~\ref{thm:central}, a degree-one factor can select only the central
site, and by Theorem~\ref{thm:d4-complete} the only degree-two subset in
degree four is $\{1,4\}$ (equivalently its complement $\{2,3\}$).  In both
cases the subset is $\iota$-invariant.  The endpoint and spin families are
compatible with the same pattern, which motivates the following conjecture.

\begin{conjecture}[Reversal invariance]\label{con:reversal}
Let $(a,c)\in\mathcal U_n$ and let $S$ label a monic factor of $\chi_n$, so
that $\mathcal R_{n,S}\ne\emptyset$.  Then $S=\iota S$, that is,
$i\in S$ if and only if $n+1-i\in S$.
\end{conjecture}

Conjecture~\ref{con:reversal} is consistent with all the explicit families of
this paper: the endpoint family of Theorem~\ref{thm:peel} realizes
$S=\{1,n\}$, the central constant branch of Theorem~\ref{thm:central}
realizes $S=\{(n+1)/2\}$, and the spin family realizes
$S=\{i,n+1-i\}$ for every $i$.  It is logically independent of
Proposition~\ref{prop:rank-symmetry}, which constrains the ranks of the
diagonal entries rather than the positions of the sites, and holds only on the
real Jacobi locus; on that locus the two statements together force $S$ to be
invariant under both involutions.

\begin{corollary}[A general upper bound]\label{cor:global-upper}
For $n\ge4$,
\[
\operatorname{codim}_{\mathcal U_n}\mathcal R_n\le
\begin{cases}
\dfrac{n-1}{2},& n\ \text{odd},\\[2mm]
n-2,& n\ \text{even}.
\end{cases}
\]
Here codimension is taken inside $\mathcal U_n$.
\end{corollary}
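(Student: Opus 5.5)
The plan is to exhibit, for each $n\ge4$, a non-empty reducible stratum of the required codimension and then apply the upper bound of Proposition~\ref{prop:kappa}. Reducibility loci are closed in $\mathcal U_n$ (Proposition~\ref{prop:subset-closed}), and $\mathcal R_n$ contains every non-empty subset stratum. So it is enough to find one non-empty $\mathcal R_{n,S}$ with an irreducible component of codimension at most the stated number. The codimension of $\mathcal R_n$ is then bounded by that number, since the codimension of a union is the minimum over its components.

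\emph{Even $n$.} Take $S=\{1,n\}$. The endpoint family $\mathcal N_n$ of Theorem~\ref{thm:peel} lies in $\mathcal R_{n,\{1,n\}}$, and by Corollary~\ref{cor:endpoint-geometry} it is irreducible of codimension $n-2$ in $\mathcal U_n$. The closure of $\mathcal N_n$ lies inside some irreducible component of the closed set $\mathcal R_n$, and that component therefore has codimension at most $n-2$. This is Corollary~\ref{cor:endpoint-upper}; alternatively, non-emptiness together with Proposition~\ref{prop:kappa} already gives codimension at most $\kappa(n,2)=n-2$.

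\emph{Odd $n=2m+1$.} Take $S=\{m+1\}$, the central site. By Proposition~\ref{prop:kappa}, every component of a non-empty $\mathcal R_{n,\{m+1\}}$ has codimension at most $\kappa(n,1)=\lceil (n-1)/2\rceil=m$. One can also see this directly from Corollary~\ref{cor:const-stratum}: the stratum is cut out by $m$ equations, so by Krull each component has codimension at most $m$. The only remaining task is to show that the stratum is non-empty.

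For non-emptiness I would use the spin family of Corollary~\ref{cor:spin-family}. There $a_k=n+1-2k$ are pairwise distinct and $c_k=k(n-k)\ne0$, so the point lies in $\mathcal U_n$. For odd $n$ one has $P_n=\lambda\cdot\prod(\ldots)$, and $\lambda=\lambda+a_{m+1}$ because $a_{m+1}=0$. Hence the spin point lies in $\mathcal R_{n,\{m+1\}}$. If one prefers not to invoke the spin family, the mirror criterion \eqref{eq:mirror} can be satisfied by a reversal-symmetric choice: take $a_{m+1-k}=a_{m+1+k}$ on the half-chains. That choice is forbidden on $\mathcal U_n$, however, which is why the spin point is the convenient witness.

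The main point to check is that the spin point genuinely lies in $\mathcal U_n$ (distinctness and non-zero couplings), together with the bookkeeping that the codimension bound for a stratum passes to $\mathcal R_n$. Both are routine. Finally, I would note that for odd $n\ge5$ one has $(n-1)/2\le n-2$, so the odd bound improves on the endpoint bound.
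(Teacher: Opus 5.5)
Your proof is correct and follows the paper's. For odd $n$, the paper likewise uses the spin point of Corollary~\ref{cor:spin-family} to show the central constant-branch stratum is non-empty in $\mathcal U_n$, then applies Proposition~\ref{prop:kappa} with $\kappa(n,1)=(n-1)/2$. For even $n$, the paper uses the spin factor $\lambda^2-(1+t)$ with Proposition~\ref{prop:kappa}; your main route via the $(n+1)$-dimensional family $\mathcal N_n$ is the alternative the paper itself notes after Corollary~\ref{cor:endpoint-upper}, and it gets the bound from a direct dimension count rather than the height theorem.
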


\begin{proof}
The spin point belongs to $\mathcal U_n$ for every $n$.  If $n=2m+1$, the
factor $\lambda$ gives a non-empty degree-one incidence stratum, so
Proposition~\ref{prop:kappa} gives codimension at most
$\kappa(n,1)=m$.  If $n=2m\ge4$, the factor
$\lambda^2-(1+t)$ gives a non-empty degree-two incidence stratum, and the
same proposition gives codimension at most $\kappa(n,2)=n-2$.
\end{proof}

\subsection{Degree eight}\label{sec:degree-eight}
On $\mathcal U_8$, Proposition~\ref{prop:even-factor-degree} rules out odd
factor degrees.  Consequently every reducible polynomial admits, up to
interchanging the two factors, a binary splitting of type $2+6$ or $4+4$.
There are therefore only
\[
\binom82=28,\qquad \frac12\binom84=35
\]
specialization subsets to consider, for a total of $63$ subset strata.
Proposition~\ref{prop:kappa} predicts
\begin{equation}\label{eq:d8-profile}
\begin{array}{c|cc}
\text{splitting type}&2+6&4+4\\
\hline
\kappa(8,d)&6&8.
\end{array}
\end{equation}

At $n=8$, Corollary~\ref{cor:spin-family} gives the point
\begin{equation}\label{eq:d8-spin-point}
(a_1,\ldots,a_8)=(7,5,3,1,-1,-3,-5,-7),
\end{equation}
\[
(c_1,\ldots,c_7)=(7,12,15,16,15,12,7),
\]
with
\begin{equation}\label{eq:d8-spin-factor}
P_8(\lambda,t)=
\prod_{q\in\{1,3,5,7\}}\bigl[\lambda^2-q^2(1+t)\bigr].
\end{equation}
This point lies in $\mathcal U_8$ and belongs simultaneously to several
$2+6$ and $4+4$ subset strata.

\begin{proposition}[Exact degree-eight incidence ranks]\label{prop:d8-ranks}
At the point \eqref{eq:d8-spin-point}, fix any one of the four quadratic
factors in \eqref{eq:d8-spin-factor} and group it against the other three.
The corresponding $2+6$ bounded-coefficient incidence Jacobian has full row
rank $24$.  Likewise, for each of the three unordered groupings of the four
quadratics into two pairs, the corresponding $4+4$ incidence Jacobian has
full row rank $24$.

Consequently these incidence branches have image codimensions $6$ and $8$,
respectively, in $\mathcal U_8$.
\end{proposition}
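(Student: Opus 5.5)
The plan is a direct exact linear-algebra certificate at the spin point, followed by an appeal to Proposition~\ref{prop:kappa}. First set up the incidence system explicitly. The unknowns are the $15$ chain coordinates $(a_1,\ldots,a_8,c_1,\ldots,c_7)$ together with the bounded coefficients of the two monic factors. For $2+6$ these number $\nu(2)+\nu(6)=3+15=18$, and for $4+4$ they number $\nu(4)+\nu(4)=8+8=16$. The equations are the $\nu(8)=24$ coefficient identities $[\lambda^{8-j}t^k](FG-P_8)=0$ with $1\le j\le 8$ and $0\le k\le\lfloor j/2\rfloor$. The variable counts are therefore $33$ and $31$. The differences from $24$ give incidence dimensions $9$ and $7$, so the predicted codimensions in the $15$-dimensional $\mathcal U_8$ are $6$ and $8$, in agreement with \eqref{eq:d8-profile}.

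Next, evaluate the factors at the spin point. Take $F=\lambda^2-q^2(1+t)$ for the chosen $q\in\{1,3,5,7\}$, with $G$ the product of the other three quadratics; in the $4+4$ case, $F$ and $G$ are products of complementary pairs. All entries of the Jacobian are then integers. Row $(j,k)$ has entries $-\partial P_8/\partial a_i$ and $-\partial P_8/\partial c_i$ in the chain columns; by the matching expansion in the proof of Theorem~\ref{thm:central}, these are explicit sums of products over matchings. In the factor columns the entry is the $(j,k)$ coefficient of $\lambda^{\alpha}t^{\beta}G$ or $\lambda^{\alpha}t^{\beta}F$. There are four $2+6$ cases and three $4+4$ cases, seven integer matrices of size $24\times33$ or $24\times31$ in all. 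For each one I will exhibit a nonzero $24\times24$ minor, or compute the rank over $\mathbb Q$ exactly by fraction-free elimination, or reduce modulo a large prime, since rank modulo $p$ bounds the rational rank from below. These computations are what the appendix certificates record.

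To keep the calculation conceptually transparent, I would first reduce it. The factor columns span the image of the map $(\delta F,\delta G)\mapsto G\,\delta F+F\,\delta G$ restricted to bounded-degree perturbations. Because $F$ and $G$ are coprime, this map is injective. Full rank is then equivalent to the condition that the chain-derivative vectors $\partial P_8/\partial a_i$ and $\partial P_8/\partial c_i$ span the quotient of the $24$-dimensional space of bounded polynomials by that image. The quotient has dimension $6$ in the $2+6$ case and $8$ in the $4+4$ case. This shrinks the check to a $6\times15$ or $8\times15$ matrix, obtained by reducing each derivative of $P_8$ modulo the image, for instance by normal forms modulo $F$ and $G$. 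Even so, the final step is still an exact rank computation.

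Once full row rank $24$ is established, the last part of Proposition~\ref{prop:kappa} applies. The incidence variety is smooth of dimension $9$ or $7$ at the point. Hensel uniqueness shows that its projection to $\mathcal R_{8,S}$ is a graph, so the image branch has codimension $\kappa(8,2)=6$ or $\kappa(8,4)=8$. The main obstacle is the rank certification itself. Nothing structural forces full rank, and the symmetric spin point is exactly the kind of place where rank might drop. I will therefore do the exact computation rather than attempt a symbolic argument, and I will double-check the row count, making sure all $24$ coefficient monomials, including $k=0$, are included.
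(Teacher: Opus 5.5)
Your proposal is correct and follows the paper's argument: count $33$ (resp.\ $31$) variables against $24$ coefficient equations, certify rank $24$ at the spin point by exact arithmetic (the paper records a non-zero $24\times24$ minor for one grouping of each type and uses exact row reduction for the others), and then apply the last part of Proposition~\ref{prop:kappa}, using injectivity or finiteness of the projection to chain space. Your extra reduction is a valid optional simplification that the paper does not use: coprimality of $F$ and $G$ makes the factor columns span an $18$- (resp.\ $16$-)dimensional subspace, so only a $6\times15$ (resp.\ $8\times15$) rank check on the chain derivatives remains.
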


\begin{proof}
The variable and equation counts are
\[
\begin{array}{c|c|c}
\text{splitting}&\text{variables}&\text{coefficient equations}\\
\hline
2+6&33&24\\
4+4&31&24.
\end{array}
\]
Exact rational row reduction gives rank $24$ in every grouping stated above.
Since the projection from the incidence variety to the $15$ chain parameters
has finite fibres, Proposition~\ref{prop:kappa} gives image codimensions
$15-(33-24)=6$ and $15-(31-24)=8$.  Non-zero maximal minors giving exact
rank certificates are recorded in Appendix~\ref{app:exact-checks}.
\end{proof}

\begin{conjecture}[Degree-eight classification by codimension]
\label{con:d8-class}
For every subset $S\subset\{1,\ldots,8\}$ with $|S|=2$, every non-empty
$\mathcal R_{8,S}$ is pure of codimension $6$ in $\mathcal U_8$.  For
$|S|=4$, after identifying $S$ with its complement, every non-empty
$\mathcal R_{8,S}$ is pure of codimension $8$.  Consequently
\[
\operatorname{codim}_{\mathcal U_8}\mathcal R_8=6,
\]
and every component of minimal codimension is generically of type $2+6$.
\end{conjecture}

\begin{remark}[Degree eight under Conjecture~\ref{con:reversal}]
\label{rem:d8-reduction}
Conjecture~\ref{con:reversal} would reduce the $63$ subset strata to seven:
the four reversal-invariant $2$-subsets
\[
\{1,8\},\qquad\{2,7\},\qquad\{3,6\},\qquad\{4,5\},
\]
and, up to complementation, the three reversal-invariant $4$-subsets
\[
\{1,2,7,8\},\qquad\{1,3,6,8\},\qquad\{1,4,5,8\}.
\]
All seven are realized at the spin point \eqref{eq:d8-spin-point}: by
\eqref{eq:d8-spin-factor} the quadratic factor $\lambda^2-q^2(1+t)$ with
$q=9-2i$ specializes at $t=0$ to $(\lambda+a_i)(\lambda+a_{9-i})$, and the
three unordered pairings of the four quadratics give the three $4$-subsets.
Thus under Conjecture~\ref{con:reversal} every non-empty degree-eight subset
stratum contains the spin point, and Conjecture~\ref{con:d8-class} becomes a
statement about exactly seven strata, each equipped with the exact transverse
test point of Proposition~\ref{prop:d8-ranks}.  For the endpoint stratum $\mathcal R_{8,\{1,8\}}$, the explicit family
$\mathcal N_8$ already has the predicted codimension.
\end{remark}

Propositions~\ref{prop:even-factor-degree},~\ref{prop:kappa}, and~\ref{prop:d8-ranks} separate the proved and conjectural parts of the
degree-eight picture.  The possible factor degrees are completely
determined, and Proposition~\ref{prop:kappa} gives codimension at most $6$
and $8$ for irreducible components of non-empty $2+6$ and $4+4$ subset
strata.  What remains open is the matching lower bound, i.e., the exclusion
of excess-dimensional components.  The spin point shows that both expected codimensions are
actually attained by transverse local branches.

\section{Outlook: a geometric codimension picture}\label{sec:outlook}

The results above suggest that, on the principal stratum, factor degree is the
natural geometric invariant of reducibility.  Proposition~\ref{prop:kappa}
gives an upper bound for the codimension of every non-empty subset stratum.
The main question is whether some components have excess dimension.

\begin{conjecture}[Expected codimension]\label{con:expected-codim}
Let $S\subset\{1,\ldots,n\}$ be non-empty and proper, put
$d=|S|\le n/2$, and assume $\mathcal R_{n,S}\ne\emptyset$.  Then every
irreducible component of $\mathcal R_{n,S}$ has codimension
\[
\kappa(n,d)=\left\lceil\frac{d(n-d)}2\right\rceil
\]
in $\mathcal U_n$.
\end{conjecture}

The conjectural content is the lower bound: Proposition~\ref{prop:kappa}
already gives $\operatorname{codim}\le\kappa(n,d)$.  Equality is proved in
degree four, and Proposition~\ref{prop:d8-ranks} gives transverse
degree-eight branches with the expected codimensions.  The endpoint family
realizes the expected value $n-2$ for $d=2$ in every degree.

For odd $n=2m+1$, the smallest formal value of $\kappa(n,d)$ is
$\kappa(n,1)=m$; Theorem~\ref{thm:central} shows that degree-one factors are
central constant branches.  For even $n$, odd factor degrees are excluded by
Proposition~\ref{prop:even-factor-degree}, and the smallest admissible value is
$\kappa(n,2)=n-2$.  This leads to the global profile.

\begin{conjecture}[Codimension of the reducible locus]
\label{con:codim-profile}
For $n\ge4$,
\[
\operatorname{codim}_{\mathcal U_n}\mathcal R_n=
\begin{cases}
\dfrac{n-1}{2},& n\ \mathrm{odd},\\[2mm]
n-2,& n\ \mathrm{even}.
\end{cases}
\]
For odd $n$, minimal components should be constant-branch components; for
even $n$, they should be generically of type $2+(n-2)$.
\end{conjecture}

The inequality ``$\le$'' is already proved: for even $n$ it follows from the
endpoint family, and for odd $n$ from the central branch in
Corollary~\ref{cor:spin-family}.  The quartic theorem proves equality for
$n=4$, while Proposition~\ref{prop:d8-ranks} supplies the expected transverse
branches in degree eight.

Three concrete problems remain.  First, prove
Conjecture~\ref{con:reversal}; already the degree-two case for general even
$n$ would reduce the degree-eight analysis from $63$ subset strata to seven.
Second, determine whether the closure of $\mathcal N_n$ is the only component
of the endpoint stratum $\mathcal R_{n,\{1,n\}}$ dominating the diagonal
space; Corollary~\ref{cor:endpoint-low} settles this for $n\le4$.  Third,
construct and classify the strata associated with the inner reversal pairs
$\{i,n+1-i\}$, and determine which of them meet the real Jacobi locus.
Proposition~\ref{prop:rank-symmetry} gives an independent necessary condition
in the real case.

In this picture the special symmetry mechanisms remain useful examples, but
the organizing principle is the geometry of factorization strata.  Degree
four is completely classified, degree eight has only the two splitting types
$2+6$ and $4+4$, and the conjectures above give a concrete framework for the
general case.

\appendix
\section{Exact degree-eight rank certificates}\label{app:exact-checks}

The only computational assertions used as results are the rank statements in
Proposition~\ref{prop:d8-ranks}; all calculations below are in exact integer
arithmetic.  For a factor of $\lambda$-degree $d$ we use
\[
F(\lambda,t)=\lambda^d+
\sum_{j=1}^{d}\left(\sum_{k=0}^{\lfloor j/2\rfloor}
f_{j,k}t^k\right)\lambda^{d-j},
\]
and the analogous template for the complementary factor.  We differentiate
all coefficients of $FG-P_8$ with respect to the chain and factor variables.

At the spin point \eqref{eq:d8-spin-point}, take
\[
F=\lambda^2-(1+t),\qquad
G=\prod_{q\in\{3,5,7\}}[\lambda^2-q^2(1+t)].
\]
The $2+6$ incidence Jacobian is $24\times33$.  The $24\times24$ minor formed
from all $15$ chain columns, all three $F$-coefficient columns, and
\[
g_{1,0},g_{2,0},g_{2,1},g_{3,0},g_{4,0},g_{4,1}
\]
has determinant
\[
-2^{83}3^{29}5^{13}7^{11}\cdot11\cdot23\cdot41\cdot12979\ne0.
\]
Hence the rank is $24$.  Exact row reduction gives the same rank for each of
the other three choices of quadratic factor.

For the $4+4$ grouping
\[
F=[\lambda^2-(1+t)][\lambda^2-9(1+t)],\qquad
G=[\lambda^2-25(1+t)][\lambda^2-49(1+t)],
\]
the Jacobian is $24\times31$.  Taking all $15$ chain columns, all eight
$F$-coefficient columns, and the column $g_{2,0}$ gives
\[
\det=-2^{102}3^{27}5^{18}7^{11}\ne0.
\]
The other two unordered pairings of the four quadratic factors also have
rank $24$ by exact row reduction.

\section*{Acknowledgements}
This project grew out of discussions with Professor B.~Mityagin at the
Department of Mathematics, Stockholm University, in 2010.  I am grateful to
Brendan Morey for identifying mistakes in an earlier version of the
manuscript through AI-assisted symbolic experimentation and for communicating
them to me.

\smallskip
\noindent\textit{Generative-AI disclosure.}
OpenAI ChatGPT (GPT-5.6 Sol) was used for language editing, organization of
the exposition, proof checking, and assistance in preparing exact symbolic
verification scripts.  Anthropic Claude was used in the same capacity for the
material of Sections~\ref{sec:central}--\ref{sec:factor-strata}, including
the symbolic experiments that located the endpoint family and the subset
scans, and assisted in drafting those sections.  The mathematical statements
are presented either with proofs or explicitly as conjectures or exact
computational checks.  The author takes responsibility for the content of the
manuscript.

\section*{Data availability statement}
No datasets were generated or analysed in this study.

\end{document}